\newtheorem{theorem}{Theorem}[section]
\newtheorem{pro}[theorem]{Proposition}
\newtheorem{lem}[theorem]{Lemma}
\newtheorem{coro}[theorem]{Corollary}
\theoremstyle{definition}
\newtheorem{de}[theorem]{Definition}
\theoremstyle{remark}
\newtheorem{re}[theorem]{Remark}
\numberwithin{equation}{section}
\renewcommand{\Re}{\operatorname{Re}}
\renewcommand{\Im}{\operatorname{Im}}
\renewcommand{\MR}[1]{}
\theoremstyle:=theorem,de,re,pro,lem,coro,plain\do{%
     \expandafter\g@addto@macro\csname th@\theoremstyle\endcsname{%
        \addtolength\thm@preskip\parskip
     }%
   }
\begin{document}

\title[Scattering in the radial INLS]{Scattering of radial solutions to the Inhomogeneous Nonlinear Schrödinger Equation}

\author[L. Campos]{Luccas Campos}
\address{Department of Mathematics, UFMG, Brazil and Department of Mathematics, FIU, USA}
\email{luccasccampos@gmail.com}
\thanks{The author thanks Luiz Gustavo Farah (UFMG) and Svetlana Roudenko (FIU) for their valuable comments and suggestions which helped improve the manuscript. This work was done when the first author was visiting Florida International University in 2018-19 under the support of Coordenação de Aperfeiçoamento de Pessoal de Nível Superior - Brasil (CAPES), for which the author is very grateful as it boosted the energy into
the research project.
L. C. was financed in part by the Coordenação de Aperfeiçoamento de Pessoal de Nível Superior - Brasil (CAPES) - Finance Code 001.}

\begin{abstract}
We prove scattering below the mass-energy threshold for the focusing inhomogeneous nonlinear Schrödinger equation
		\begin{equation}
			iu_t + \Delta u + |x|^{-b}|u|^{p-1}u=0,\\
		\end{equation}
		when $b \geq 0$ and $N > 2$ in the intercritical case $0 < s_c <1$. This work generalizes the results of Farah and Guzm\'an \cite{FG_Scat}, allowing a broader range of values for the parameters $p$ and $b$. We use a modified version of Dodson-Murphy's approach \cite{MD_New}, allowing us to deal with the inhomogeneity. The proof is also valid for the classical nonlinear Schrödinger equation ($b = 0$), extending the work in \cite{MD_New} for radial solutions in all intercritical cases.
\end{abstract}

\maketitle

\section{Introduction}

In this work, we consider the Cauchy problem for the focusing inhomogeneous nonlinear Schrö\-din\-ger equation (INLS)

\begin{equation}\label{INLS}
\begin{cases}
iu_t + \Delta u + |x|^{-b}|u|^{p-1}u=0,\\
			u(x,0) = u_0(x) \in H^1(\mathbb{R}^N), 
\end{cases}
\end{equation}

as well as its homogeneous version (NLS)

\begin{equation}\label{NLS}
\begin{cases}
iu_t + \Delta u + |u|^{p-1}u=0,\\
			u(x,0) = u_0(x) \in H^1(\mathbb{R}^N), 
\end{cases}
\end{equation}

where $u: \mathbb{R}^N\times\mathbb{R} \to \mathbb{C}$, $N > 2$, $0 \leq b < 2$, and
\begin{equation}\label{cond_p}
	 1 + \frac{4-2b}{N} < p < 1+\frac{4-2b}{N-2}.
\end{equation}

The homogeneous case $b = 0$ has been extensively studied over the past decades (for a textbook treatment, we refer the reader to Bourgain \cite{Bo99}, Cazenave \cite{cazenave}, Linares-Ponce \cite{LiPo15}, Tao \cite{TaoBook}).

The inhomogeneous version of the nonlinear Schrödinger equation arises as a model in optics, in the form
\begin{equation}
	iu_t + \Delta u + V(x)|u|^{p-1}u=0,
\end{equation}

The potential $V(x)$ accounts for the inhomogeneity of the medium. We refer to Gill \cite{Gill}, Liu and Tripathi \cite{Liu} for the physical motivation. The particular case $V(x) = |x|^{-b}$ appears naturally as a limiting case of potentials $V(x)$ that decay as $|x|^{-b}$ at infinity (Genoud and Stuart \cite{g_8}).

We briefly review the literature about \eqref{INLS} and \eqref{NLS}. It is well-known that the Cauchy problem for \eqref{NLS} is locally well-posed in $H^1(\mathbb{R}^N)$, $N \geq 1$ (Ginibre and Velo \cite{GV79}, Kato \cite{Kato87}). More precisely, given $u_0 \in H^1(\mathbb{R}^N)$, there exists $T > 0$ and a unique solution $u \in C([0,T],H^1(\mathbb{R}^N))\cap S(L^2,[0,T])$ to the NLS equation \eqref{NLS}, where $S(L^2,[0,T])$ is the intersection of all $L^2$-admissible spaces (see Definition \ref{Hs_adm} below). 

For the case $b > 0$, Genoud and Stuart \cite{g_8} proved that \eqref{INLS} is locally well-posed in $H^1(\mathbb{R}^N)$, $N \geq 1$ for $0 < b < \min\{2,N\}$. More recently, Guzm\'an \cite{Boa} established the local well-posedness of \eqref{INLS} based on Strichartz estimates. In particular, defining
\begin{equation}\label{def_b_star}
b^* = \begin{cases} \frac{N}{3}, & N \leq 3  \\
2, & N \geq 4, \\
\end{cases}    
\end{equation}

he proved that, for $N \geq 2$ and $0 < b < b^*$, the initial value problem \eqref{INLS} is locally well-posed in $H^1(\mathbb{R}^N)$. Dinh \cite{Boa_Dinh} extended Guzmán's results in dimension $N = 3$ for $0 < b < \frac{3}{2}$ and $1+\frac{4-2b}{N} < p < \frac{5-2b}{2b-1}$. Note that, in the results of Guzm\'an \cite{Boa} and Dinh \cite{Boa_Dinh}, the ranges of $b$ are more restricted than those in the results of Genoud and Stuart \cite{g_8} (mainly due to the natural restrictions on Sobolev embeddings). However, Guzmán and Dinh give more detailed information on the solutions, showing that there exists $T(\|u_0\|_{H^1})>0$ such that $u \in S(L^2,[0,T])$.

These equations are invariant under scaling. Indeed, if $u(x,t)$ is a solution to \eqref{INLS}, then
\begin{equation}
	u_\lambda(x,t) = \lambda^\frac{2-b}{p-1}u(\lambda x, \lambda^2 t), \quad \lambda > 0,
\end{equation}
is also a solution. Computing the homogeneous Sobolev norm, we obtain
\begin{equation}
	\|u_\lambda(\cdot,0)\|_{\dot{H}^s} = \lambda^{s-\frac{N}{2}-\frac{2-b}{p-1}}\|u_0\|_{\dot{H}^s}.
\end{equation}

The Sobolev index which leaves the scaling symmetry invariant is called the \textit{critical index} and is defined as
\begin{equation}
s_c = \frac{N}{2} - \frac{2-b}{p-1}.
\end{equation}

Note that the condition \eqref{cond_p} is equivalent to $0 < s_c < 1$.

Solutions to the Cauchy problem \eqref{INLS} conserve mass $M[u]$ and energy $E[u]$, defined by
\begin{equation}\label{def_mass}
M\left[u(t) \right] = \int |u(t)|^2 dx = M[u_0],
\end{equation}
\begin{equation}\label{def_energy}
E\left[u(t) \right] = \frac{1}{2}\int |\nabla u(t)|^2 dx - \frac{1}{p+1} \int |x|^{-b}|u(t)|^{p+1} dx = E[u_0].
\end{equation}

Note that mass and energy are not scale-invariant quantities when $0 < s_c < 1$. However, the interpolation quantity $M[u_0]^{1-s_c}E[u_0]^{s_c}$ defined by Holmer and Roudenko \cite{holmer2007blow} is invariant under scaling, and plays a crucial role in the description of global behavior of solutions to \eqref{INLS}.

The global behavior of $H^1(\mathbb{R}^N)$ solutions to \eqref{INLS} is related to the existence of \textit{standing waves} $u(x,t) = e^{it}\phi(x)$, where $\phi \in H^1(\mathbb{R}^N)$ satisfies the elliptic equation
\begin{equation}\label{def_Q}
	\Delta \phi - \phi + |x|^{-b}|\phi|^{p-1}\phi = 0.
\end{equation}

Standing waves of particular interest are given by solutions of \eqref{def_Q} which are positive and radial, also known as \textit{ground states}. Questions about existence and uniqueness of ground states were answered in Berestycki and Lions \cite{BL83}, Gidas et al. \cite{Gidas81}, Kwong \cite{Kwong89} for the case $b=0$. For the inhomogeneous case, existence of ground state was proved in Genoud \cites{g_5, g_6}, Genoud and Stuart \cite{g_8}, while uniqueness was handled in Yanagida \cite{g_19}, Genoud \cite{g_7}. Existence and uniqueness of $Q$, the radial, positive solution to \eqref{def_Q} hold for $N \geq 1$ and $0 \leq b < \min\{2,N\}$. 
\begin{re}\label{re_Q}
It is worth mentioning that $E(Q) > 0$ if $0 < s_c < 1$ and $Q$ decays exponentially.
\end{re}

Before stating our main result, we give the scattering criterion, which was first proved for the $3d$ cubic NLS equation by Tao \cite{Tao_Scat}.
\begin{theorem}[Scattering criterion]\label{scattering_criterion}
Let  $N >2$, $1+\frac{4-2b}{N} < p < 1+\frac{4-2b}{N-2}$ and $0 \leq b < 2$. Consider a spherically symmetric $H^1(\mathbb{R}^N)$ solution $u$ to \eqref{INLS} defined on $[0,+\infty)$ and assume the a priori bound 
\begin{equation}\label{E}
\displaystyle\sup_{t \in [0,+\infty)}\left\|u(t)\right\|_{H^1_x} = E < +\infty.
\end{equation}

There exist constants $R > 0$ and $\epsilon>0$ depending only on $E$, $N$, $p$ and $b$ (but never on $u$ or $t$) such that if
\begin{equation}\label{scacri}
\liminf_{t \rightarrow +\infty}\int_{B(0,R)}|u(x,t)|^2 \, dx \leq \epsilon^2,
\end{equation}
then there exists a function $u_+ \in H^1(\mathbb{R}^N)$ such that 
$$
\lim_{t \rightarrow +\infty}\left\|u(t)-e^{it\Delta}u_+\right\|_{H^1(\mathbb{R}^n)} = 0,
$$ 
i.e., $u$ scatters forward in time in $H^1(\mathbb{R}^N)$.
\end{theorem}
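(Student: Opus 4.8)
The plan is to reduce scattering to a global scale-invariant Strichartz bound and then feed the local-mass hypothesis into a Duhamel splitting. Write $F(u)=|x|^{-b}|u|^{p-1}u$ for the nonlinearity, and let $\|\cdot\|_{S(\dot H^{s_c})}$ and its dual $\|\cdot\|_{N(\dot H^{s_c})}$ denote the scale-invariant Strichartz norms (on the indicated time interval) built from the $\dot H^{s_c}$-admissible pairs underlying the local theory of \cite{Boa,Boa_Dinh}. By the Strichartz-based well-posedness and standard small-data scattering theory, it suffices to exhibit a single time $T$ for which the free evolution of the current state is below the small-data threshold, namely
\[
\|e^{i(t-T)\Delta}u(T)\|_{S(\dot H^{s_c};[T,+\infty))}\le\delta=\delta(E);
\]
a continuity/bootstrap argument on $[T,+\infty)$ then gives $\|u\|_{S(\dot H^{s_c};[T,+\infty))}<+\infty$, the local theory controls $[0,T]$, and scattering follows by checking $H^1$-convergence of $u_+=u_0+i\int_0^{+\infty}e^{-is\Delta}F(u(s))\,ds$. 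To estimate the free evolution I would expand by Duhamel from time $0$,
\[
e^{i(t-T)\Delta}u(T)=e^{it\Delta}u_0+i\int_0^{T}e^{i(t-s)\Delta}F(u(s))\,ds,
\]
noting that $u_0\in H^1\hookrightarrow\dot H^{s_c}$ forces $\|e^{it\Delta}u_0\|_{S(\dot H^{s_c};[0,+\infty))}<+\infty$, so its tail on $[T,+\infty)$ is $<\delta/3$ for $T$ large. The integral is then split at $T-T_0$ into a distant past $[0,T-T_0]$ and a recent past $[T-T_0,T]$, with $T_0$ a large parameter.

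For the distant past I would combine the inhomogeneous (Kato) Strichartz estimate with the dispersive decay over the time gap $t-s\ge T_0$. Since $s_c>0$, the exponents can be chosen so that propagating $F(u(s))$ from $[0,T-T_0]$ up to $t\ge T$ yields a factor decaying as a negative power of $T_0$; controlling $\|F(u(s))\|$ uniformly in $s$ via the a priori bound \eqref{E} — where the weight $|x|^{-b}$ is absorbed by splitting $|x|<1$ and $|x|\ge1$ and applying Hardy's inequality together with Sobolev embedding, admissible precisely because $p$ lies in the range \eqref{cond_p} and $0\le b<2$ — then makes the distant past $<\delta/3$ once $T_0=T_0(E,\delta)$ is fixed large.

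The recent past is the crux. With $T_0$ now fixed, I would bound it by duality, $\lesssim\|F(u)\|_{N(\dot H^{s_c};[T-T_0,T])}$, and extract smallness by a spatial decomposition into $|x|\le R$ and $|x|>R$. On the exterior I would invoke the radial Strauss estimate $|u(x,t)|\lesssim|x|^{-(N-1)/2}\|u(t)\|_{L^2}^{1/2}\|\nabla u(t)\|_{L^2}^{1/2}$, which, combined with the decay of the weight $|x|^{-b}$, contributes a gain $R^{-\theta}$ that is small for $R$ large. On the interior the relevant factor is the local mass $\int_{|x|\le R}|u|^2$, which along the $\liminf$ sequence is $\le\epsilon^2$ at the chosen time $T$; I would propagate this smallness across the fixed-length slab $[T-T_0,T]$ by an almost-conservation (local mass/flux) estimate, whose error is $\lesssim T_0/R$ and hence negligible for $R$ large. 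Interpolating these localized bounds against the uniform $H^1$ control to reach the scale-invariant $N(\dot H^{s_c})$ norm, the recent past is $\lesssim\epsilon^{\sigma}+R^{-\theta}$, which is $<\delta/3$ after choosing $R$ large and then $\epsilon$ small; adding the three contributions closes the estimate.

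The essential difficulty — and the point where the modification of Dodson–Murphy is required — is carrying the weight $|x|^{-b}$ through both the uniform control of $F(u)$ in the distant past and the interior/exterior Hölder–Sobolev splitting in the recent past, while keeping every exponent $\dot H^{s_c}$- or $L^2$-admissible across the entire intercritical window \eqref{cond_p} for all $N>2$ and $0\le b<2$. The interior estimate is the most delicate: it must tolerate the singularity of $|x|^{-b}$ at the origin (handled via Hardy) without spoiling the extraction of the small local-mass factor, and the radial hypothesis is what makes the exterior decay strong enough to absorb the loss incurred near the origin.
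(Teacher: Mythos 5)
Your proposal is correct and follows essentially the same route as the paper: Duhamel splitting into linear part, recent past, and distant past; propagation of the small local mass across a bounded time slab combined with the radial Strauss decay on the exterior to handle the recent past; dispersive decay over the time gap (with the weight $|x|^{-b}$ absorbed by Hölder/Sobolev/Hardy) for the distant past; and closure via small-data theory on $[T,+\infty)$. The only point you gloss over that the paper treats with care is the truncated inhomogeneous Strichartz bound $\bigl\|\int_{I_1}e^{i(t-s)\Delta}F(u)\,ds\bigr\|_{S(\dot H^{s_c},[T,\infty))}\lesssim\|F(u)\|_{S'(\dot H^{-s_c},I_1)}$, which is not literally the standard Kato estimate and is proved in the paper as a separate local-in-time estimate via Hardy--Littlewood--Sobolev.
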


\begin{re} The notation $N > 2$ instead of $N \geq 3$ is intentional, since we allow $N$ to be arbitrarily close to $2$. At least in the radial case, it is possible to define Sobolev spaces with non-integer $N$, as in this case the dimension becomes just a parameter. It is also mathematically convenient, as this flexibility is useful in some harder proofs. We mention here the work of Kopell and Landman \cite{KL95} in which they constructed a \textit{blow-up profile} for equation \eqref{NLS} in the cubic case when the dimension $N$ is exponentially asymptotically close to 2. In \cite{MRS10},
Merle, Raphael and Szeftel constructed stable blow-up solutions in the cubic case when $d \gtrapprox 2$. Later, Rottshafer and Kaper \cite{RK02} improved the construction in \cite{KL95} to allow the dimension to be polynomially close to 2.
\end{re}

The criterion above is used to prove scattering in $H^1$ below the mass-energy threshold, as in the following theorem. We emphasize that the main aim of this paper is to show that a different approach, based on Dodson-Murphy's method, instead of the classic Kenig-Merle's concentration-compactness-rigidity technique, can be applied to the INLS equation. Moreover, our method extends the range of parameters in which scattering can be proved.
\begin{theorem}\label{teo1} Let $N > 2$,  $1+\frac{4-2b}{N} < p < 1+\frac{4-2b}{N-2}$, $0 \leq b < 2$, and $u_0 \in H^1_{rad}$ be such that 
$$M[u_0]^\frac{1-s_c}{s_c}E[u_0] < M[Q]^\frac{1-s_c}{s_c}E[Q]$$ and $$\|u_0\|_{L^2}^\frac{1-s_c}{s_c}\|\nabla u_0\|_{L^2} < \|Q\|_{L^2}^\frac{1-s_c}{s_c}\|\nabla Q\|_{L^2}.$$ 
Then the solution $u(t)$ to \eqref{INLS} is defined on $\mathbb{R}$ and scatters in $H^1$ in both time directions.
\end{theorem}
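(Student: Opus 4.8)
The plan is to reduce Theorem \ref{teo1} to the scattering criterion, Theorem \ref{scattering_criterion}: this requires producing the uniform bound \eqref{E} together with global existence, and then verifying the localized-mass condition \eqref{scacri}. For the first part I would invoke the sharp Gagliardo--Nirenberg inequality for the inhomogeneous nonlinearity, whose optimal constant is attained by the ground state $Q$; the Pohozaev identities for $Q$ recast that constant in terms of $M[Q]$, $E[Q]$ and $\|\nabla Q\|_{L^2}$. Writing $E[u]$ through this inequality and using conservation of mass and energy, the two hypotheses on $u_0$ translate—via a standard continuity/bootstrap argument—into a trapping estimate: there is $\delta>0$ with $\|u(t)\|_{L^2}^{(1-s_c)/s_c}\|\nabla u(t)\|_{L^2}\le(1-\delta)\|Q\|_{L^2}^{(1-s_c)/s_c}\|\nabla Q\|_{L^2}$ on the whole maximal interval. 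This excludes finite-time blow-up, yields a global solution, and gives $\sup_t\|u(t)\|_{H^1}=E<\infty$, which is exactly \eqref{E}.

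To verify \eqref{scacri} for the $R,\epsilon$ furnished by Theorem \ref{scattering_criterion}, I would run a Dodson--Murphy type truncated virial/Morawetz argument at a free radius $\rho\ge R$. Let $\phi_\rho(x)=\rho^2\phi(|x|/\rho)$ with $\phi$ radial, $\phi(r)=r^2/2$ for $r\le1$ and $\phi$ flattening for $r\ge2$, and set $M_\rho(t)=2\,\Im\int\bar u\,\nabla\phi_\rho\cdot\nabla u\,dx$. The a priori bound gives $\sup_t|M_\rho(t)|\lesssim\rho E$. Differentiating in time produces the localized virial identity, whose bulk term reproduces the virial functional on $B_\rho$ plus errors supported in the transition annulus $\{\rho\le|x|\le2\rho\}$ together with terms involving $\Delta^2\phi_\rho$ and $\nabla(|x|^{-b})$. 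The positive kinetic contribution outside $B_\rho$ is simply discarded, while the remaining error terms are estimated through the radial Sobolev decay $|u(x)|\lesssim|x|^{-(N-1)/2}\|u\|_{L^2}^{1/2}\|\nabla u\|_{L^2}^{1/2}\le|x|^{-(N-1)/2}E$, which makes them $o_\rho(1)$ uniformly in $t$.

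The essential input is then a localized coercivity, $M_\rho'(t)\ge c\int_{B_\rho}|x|^{-b}|u|^{p+1}\,dx-o_\rho(1)$ uniformly in $t$. I would obtain it by applying the sharp Gagliardo--Nirenberg inequality to a spatial truncation $\chi u$ of $u$ to $B_\rho$ and using the trapping estimate from the first step to beat the sharp constant by a factor, with the commutator terms generated by $\nabla\chi$ absorbed by the radial decay above. Integrating this differential inequality over $[0,T]$ and using $|M_\rho|\lesssim\rho E$ gives $\frac1T\int_0^T\int_{B_\rho}|x|^{-b}|u|^{p+1}\,dx\,dt\lesssim\frac{\rho E}{T}+o_\rho(1)$; letting $T\to\infty$ yields $\liminf_{t\to\infty}\int_{B_\rho}|x|^{-b}|u|^{p+1}\,dx\lesssim o_\rho(1)$. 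Since $B_R\subset B_\rho$, this bounds $\liminf_{t\to\infty}\int_{B_R}|x|^{-b}|u|^{p+1}\,dx$ by a quantity tending to $0$ as $\rho\to\infty$, so it must vanish; Hölder's inequality on the fixed ball $B_R$ (using $p+1>2$ and $|x|^{-b}\ge R^{-b}$ there) then gives $\liminf_{t\to\infty}\int_{B_R}|u|^2\,dx=0\le\epsilon^2$. Theorem \ref{scattering_criterion} provides forward scattering, and the time-reversal symmetry $u(x,t)\mapsto\overline{u(x,-t)}$ gives the backward direction.

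I expect the main obstacle to be the localized coercivity of the previous paragraph: reconciling the \emph{global} sub-threshold condition with the spatial localization and the singular weight $|x|^{-b}$. Near the origin $|x|^{-b}$ is singular but is tamed by the quadratic weight, whereas at large radii the nonlinear, annular and biharmonic errors must be shown to be genuinely $o_\rho(1)$, uniformly in $t$, which is exactly where radial symmetry and the hypothesis $N>2$ enter through the decay estimate, and where the admissible range \eqref{cond_p} of $(p,b)$ is needed so that the powers of $\rho$ produced by the radial decay close up with negative sign. Carefully tracking these exponents across the full intercritical range $0<s_c<1$, rather than for a single pair $(p,b)$, is the delicate computation that makes the scheme go through for the INLS and, when $b=0$, for the classical NLS.
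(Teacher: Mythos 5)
Your proposal follows essentially the same route as the paper: energy trapping via the sharp Gagliardo--Nirenberg/Pohozaev framework to get global existence and the uniform sub-threshold bound, a truncated radial virial/Morawetz functional whose coercivity on large balls is recovered by applying the trapping estimate to a spatial truncation of $u$ (with commutator and annular errors controlled by the Strauss radial decay and the $O(\rho^{-2})$ cutoff terms), time-averaging against the a priori bound $|M_\rho|\lesssim\rho$, and finally Tao's scattering criterion plus time reversal. The only differences are cosmetic --- you keep a $\liminf$ formulation where the paper extracts explicit sequences $t_n\to\infty$, $R_n=T_n^{N/(3N-2+b)}\to\infty$ via a mean-value argument --- so the scheme is correct as outlined.
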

\begin{re} The above result is known for $b = 0$ and proved in Holmer and Roudenko \cite{HR_Scat}  Duyckaerts et al. \cite{DHR_Scat}, Fang et al. \cite{FXC_Scat}, Guevara \cite{Guevara}.

The case $b >0$ is considered by Farah and Guzm\'an \cite{FG_Scat} with the assumption $0 < b < \min\{N/3,1\}$, for $N \geq 2$. In the theorem above, not only we employ a new method to prove scattering, but we actually extend the range of $b$ in dimensions $N > 2$, allowing $0 < b < ]min\{N/2,2\}$ in this case. Moreover, we extend the range of $p$ in the case $N = 3$. Indeed, the result proved in Farah and Guzm\'an \cite{FG_Scat} considered $p < 4-2b$, while here we allow $p$ to be in all the intercritical range for the 3d case.
\end{re}

\begin{re}
The proofs in \cites{HR_Scat, DHR_Scat, FXC_Scat, FG_Scat, Guevara} use the so-called concentration-com\-pact\-ness-rigidity approach, pionereed by Kenig and Merle \cite{KM_Glob} in the context of the energy-critical ($s_c = 1$) NLS equation. More recently, Dodson and Murphy \cite{MD_New} developed a new approach, based on Tao's scattering criterion in \cite{Tao_Scat} and on \textit{Virial/Morawetz estimates}. We develop here a modification of Dodson-Murphy's approach, replacing $L_t^2 W^{1,2^*}_x$ estimates by local-in-time Strichartz estimates which, together with small data theory, makes it possible to handle the inhomogeneity. Since our estimates also hold in the case $b = 0$, we immediately extend the proof in \cite{MD_New}, to $0 < s_c < 1$, $N >2$ (see also Arora \cite{AndyScat}). In lower dimensions, this approach fails due to the slow decay on time of the Schr\"odinger operator $e^{it\Delta}$.
\end{re}

This paper is organized as follows: in the next section, we introduce some notation and basic estimates. In Section $3$, we prove the scattering criterion (Theorem \ref{scattering_criterion}). In Section $4$, we apply this criterion, together with Morawetz/Virial estimates to prove Theorem \ref{teo1}.

\section{Notation and basic estimates}

We denote by $p'$ the Holder's conjugate of $p \geq 1$. We use $X \lesssim Y$  to denote $X \leq C Y$, where the constant $C$ only depends on the parameters (such as $N$, $p$, $b$, as well as $E$ in \eqref{E}) and exponents, but never on $u$ or on $t$. The notations $a^+$ and $a^-$ denote, respectively, $a+\eta$ and $a-\eta$, for a fixed $0 < \eta \ll 1$. We use $p^*$ to denote the critical exponent of the Sobolev embedding $H^1 \hookrightarrow L^{p^*}$, that is, $p^* = 2N/(N-2)$ if $N > 2$, and $p^* = +\infty$ if $N \leq 2$.

\begin{de}\label{Hs_adm} If $N \geq 1$ and $s \in (-1,1)$, the pair $(q,r)$ is called $\dot{H}^s$\textit{-admissible} if it satisfies the condition
\begin{equation}\label{hs_adm_eq}
\frac{2}{q} = \frac{N}{2}-\frac{N}{r}-s,    
\end{equation}
where
$$
2 \leq q,r \leq \infty, \text{ and } (q,r,N) \neq (2,\infty,2).
$$
In particular, if $s=0$, we say that the pair is $L^2$-admissible.
\end{de}

\begin{de}\label{As}
Given $N> 2$, consider the set
\begin{equation}
    \mathcal{A}_0 = \left\{(q,r)\text{ is } L^2\text{-admissible} \left|
    \, 2 \leq r \leq \frac{2N}{N-2}
    \right.
    \right\}.
\end{equation}
For $N>2$ and $s \in (0,1)$, consider also
\begin{equation}
    \mathcal{A}_s = \left\{(q,r)\text{ is } \dot{H}^{s}\text{-admissible} \left|\,
    \left(\frac{2N}{N-2s}\right)^+ \leq r \leq \left(\frac{2N}{N-2}\right)^-
    \right.
    \right\}
\end{equation}
and
\begin{equation}
    \mathcal{A}_{-s} = \left\{(q,r) \text{ is } \dot{H}^{-s}\text{-admissible} \left|
    \left(\frac{2N}{N-2s}\right)^+ \leq r \leq \left(\frac{2N}{N-2}\right)^-
    \right.\right\}.
\end{equation}
We define the following Strichartz norm
\begin{equation}
	\|u\|_{S(\dot{H}^s,I)} = \sup_{(q,r)\in \mathcal{A}_s}\|u\|_{L_I^qL_x^r},	
\end{equation}

and the dual Strichartz norm
\begin{equation}
	\|u\|_{S'(\dot{H}^{-s},I)} = \inf_{(q,r)\in \mathcal{A}_{-s}}\|u\|_{L_I^{q'}L_x^{r'}}.	
\end{equation}
If $s=0$, we shall write $S(\dot{H}^0,I) = S(L^2,I)$ and $S'(\dot{H}^0,I) = S'(L^2,I)$. If $I=\mathbb{R}$, we will often omit $I$.
\end{de}


\subsection{Strichartz Estimates }

In this work, we use the following versions of the Strichartz estimates:

\textit{\uline{The standard Strichartz estimates} (Cazenave \cite{cazenave}, Keel and Tao \cite{KT98}, Foschi \cite{Foschi05}})

\begin{equation}\label{S1}
\|e^{it\Delta}f\|_{S(L^2)} \lesssim\|f\|_{L^2},
\end{equation}

\begin{equation}\label{S2}
\|e^{it\Delta}f\|_{S(\dot{H}^s)} \lesssim\|f\|_{\dot{H}^s},
\end{equation}
\noeqref{S2}
\begin{equation}\label{KS1}
\left\|\int_\mathbb{R}e^{i(t-\tau)\Delta}g(\cdot,\tau) \, d\tau\right\|_{S(L^2,I)} + \left\|\int_0^te^{i(t-\tau)\Delta}g(\cdot,\tau) \, d\tau\right\|_{S(L^2,I)}\lesssim\|g\|_{S'(L^2,I)}.
\end{equation}
\noeqref{KS1}
\textit{\uline{The Kato-Strichartz estimate} (Kato \cite{Kato94}, Foschi \cite{Foschi05}})
\begin{equation}\label{KS2}
\left\|\int_\mathbb{R}e^{i(t-\tau)\Delta}g(\cdot,\tau) \, d\tau\right\|_{S(\dot{H}^s,I)} + \left\|\int_0^te^{i(t-\tau)\Delta}g(\cdot,\tau) \, d\tau\right\|_{S(\dot{H}^s,I)}\lesssim\|g\|_{S'(\dot{H}^{-s},I)}.
\end{equation}
And a \textit{\uline{local-in-time estimate}}

\begin{equation}\label{lit}
\left\|\int_a^be^{i(t-\tau)\Delta}g(\cdot,\tau) \, d\tau\right\|_{S(\dot{H}^{s},\mathbb{R})}\lesssim\|g\|_{S(\dot{H}^{-s}, [a,b])}.
\end{equation}

These relations are obtained from the decay of the linear operator (see, for instance, Linares and Ponce \cite[Lemma 4.1]{LiPo15})
\begin{equation}\label{decay}
    \|e^{it\Delta}f\|_{L^p_x} \lesssim \frac{1}{|t|^{\frac{N}{2} \left(\frac{1}{p'} - \frac{1}{p}\right)}} \|f\|_{L^{p'}_x}, \quad p \geq 2, 
\end{equation}
combined with Sobolev inequalities and interpolation. The inequalities \eqref{S1}-\eqref{KS2} are standard in the theory \cite{cazenave}. To prove \eqref{lit}, we recall the following definition.
\begin{de}
If $f \in L^{p}(\mathbb{R})$, and $0 < \alpha < 1$, define the Riesz potential of order $\alpha$ as
\begin{equation}
    I_{\alpha}f(t) = \int_{-\infty}^{+\infty}\frac{1}{|t-\tau|^{1-\alpha}}f(\tau) \, d\tau.
\end{equation}
\end{de}
The next theorem is well-known, and we refer the reader to Stein \cite[Page 119, Theorem 1]{steinbook} for a complete proof.
\begin{theorem}[Hardy-Littlewood-Sobolev] If $p$, $q>1$, $0 < \alpha < 1$ and $\frac{1}{q}+ \frac{1}{p} =\alpha$, then
\begin{equation}
    \|I_{1-\alpha}f\|_{L^{q}(\mathbb{R})} \lesssim \|f\|_{L^{p'}(\mathbb{R})}.
\end{equation}
\end{theorem}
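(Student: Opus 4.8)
The plan is to prove this as a one-dimensional instance of the Hardy--Littlewood--Sobolev inequality, working directly with the convolution structure rather than invoking an interpolation machine. Since $1-(1-\alpha)=\alpha$, the operator is $I_{1-\alpha}f(t)=\int_{\mathbb{R}}|t-\tau|^{-\alpha}f(\tau)\,d\tau$, i.e. convolution on $\mathbb{R}$ against the kernel $k(t)=|t|^{-\alpha}$ with $0<\alpha<1$. The obvious temptation, Young's inequality, is unavailable: $k$ is not integrable (it fails at $\infty$) and does not lie in any single $L^{\rho}$ (it fails at $0$ for $\rho$ large and at $\infty$ for $\rho$ small). The goal is to show $\|I_{1-\alpha}f\|_{L^{q}}\lesssim\|f\|_{L^{p'}}$ under $\frac1q+\frac1p=\alpha$; this relation is precisely the one dictated by the scaling $f(\cdot)\mapsto f(\mu\,\cdot)$, so it is the only exponent pairing consistent with homogeneity.

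The main device I would use is a pointwise splitting of the kernel at a free scale $R>0$, treating the two pieces with two different tools. Fix $t$ and write $I_{1-\alpha}f(t)=\int_{|t-\tau|\le R}+\int_{|t-\tau|>R}=:\mathrm{I}+\mathrm{II}$. For the \emph{near} piece $\mathrm{I}$, decompose dyadically into annuli $2^{-j-1}R<|t-\tau|\le 2^{-j}R$ and bound each average by the Hardy--Littlewood maximal function $Mf(t)$; summing the geometric series $R^{1-\alpha}\sum_{j\ge 0}2^{-j(1-\alpha)}$, which converges exactly because $\alpha<1$, yields $\mathrm{I}\lesssim R^{1-\alpha}Mf(t)$. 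For the \emph{far} piece $\mathrm{II}$, apply Hölder with exponents $(p,p')$: the tail norm $\big(\int_{|z|>R}|z|^{-\alpha p}\,dz\big)^{1/p}$ is finite precisely when $\alpha p>1$, which holds since $\alpha p=p/q+1>1$, and equals a constant times $R^{1/p-\alpha}=R^{-1/q}$, so $\mathrm{II}\lesssim R^{-1/q}\|f\|_{L^{p'}}$. Thus $|I_{1-\alpha}f(t)|\lesssim R^{1-\alpha}Mf(t)+R^{-1/q}\|f\|_{L^{p'}}$, with both exponents of the correct sign.

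I would then optimize in $R$. Minimizing $AR^{1-\alpha}+BR^{-1/q}$ over $R>0$ with $A=Mf(t)$ and $B=\|f\|_{L^{p'}}$ gives a bound $\lesssim A^{\theta}B^{1-\theta}$ with $\theta=\frac{1/q}{\,1-\alpha+1/q\,}$; using $\alpha=\frac1p+\frac1q$ one computes $1-\alpha+\frac1q=\frac1{p'}$, hence $\theta=p'/q$, and $\alpha<1$ guarantees $0<p'/q<1$, so this is a genuine interpolation of the pointwise bound $|I_{1-\alpha}f(t)|\lesssim (Mf(t))^{p'/q}\,\|f\|_{L^{p'}}^{1-p'/q}$. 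Raising to the power $q$, integrating in $t$, and invoking the Hardy--Littlewood maximal theorem $\|Mf\|_{L^{p'}}\lesssim\|f\|_{L^{p'}}$ closes the estimate: $\|I_{1-\alpha}f\|_{L^{q}}^{q}\lesssim\|f\|_{L^{p'}}^{q-p'}\|Mf\|_{L^{p'}}^{p'}\lesssim\|f\|_{L^{p'}}^{q}$.

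The step I expect to be the real pressure point is the use of the maximal theorem, since it requires the input exponent to be strictly above $1$; this is exactly where the hypotheses enter, as $\frac1p=\alpha-\frac1q>0$ forces $p<\infty$ and hence $p'>1$, while $\alpha<1$ forces $q>p'$. The same hypotheses keep both split integrals convergent, so the argument is sharp in the sense that it degenerates precisely at the excluded endpoints (for instance $q=\infty$, where the far piece loses its decay in $R$ and a separate weak-type argument would be required). In the regime actually used in this paper all exponents are finite and strictly interior, so the maximal-function proof applies without modification, which is why we are content to cite Stein \cite{steinbook} rather than reproduce the computation.
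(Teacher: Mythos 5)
Your argument is correct, and it takes a genuinely different route from the paper, which offers no proof at all: the text simply cites Stein \cite{steinbook}*{p.~119, Theorem 1} for the one-dimensional fractional integration theorem, whereas you give a complete, self-contained proof via Hedberg's pointwise domination $|I_{1-\alpha}f(t)|\lesssim (Mf(t))^{p'/q}\,\|f\|_{L^{p'}}^{1-p'/q}$. The exponent arithmetic all checks out: the dyadic sum over annuli converges because $\alpha<1$; the tail integral converges because $\alpha p = 1+p/q>1$ and contributes $R^{1/p-\alpha}=R^{-1/q}$; and optimizing in $R$ gives $\theta=\tfrac{1/q}{1-\alpha+1/q}=p'/q\in(0,1)$ since $1-\alpha+1/q=1/p'$ and $q>p'$. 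What your route buys is a reduction of the whole theorem to the Hardy--Littlewood maximal inequality on $L^{p'}$, which is exactly where the hypothesis $p'>1$ is consumed, and it makes transparent why the argument degenerates at the excluded endpoints. One small remark, about the statement rather than your proof: your justification that ``$\tfrac1p=\alpha-\tfrac1q>0$ forces $p<\infty$'' is circular as written (it presupposes $1/p>0$), and a literal reading of ``$p>1$'' would admit $p=\infty$, i.e.\ $p'=1$, where the strong-type bound is false; the intended reading, and the only one used in the paper's application (where the input exponent is a finite dual Strichartz exponent $\tilde q'>1$), is that $p$ is finite, so nothing is lost.
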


\begin{proof}[Proof of \eqref{lit}]

For $s \in [0,1)$, let $q$, $\tilde{q}$ and $r$ be such that $(q,r)$ is an $\dot{H}^s$-admissible pair, and $(\tilde{q},r)$ is an $\dot{H}^{-s}$-admissible pair. If $s = 0$, assume additionally that $2 < q < \infty$. Consider $\alpha := (N/2) (1/r' - 1/r) = 2/\tilde{q}+s = 2/q-s$ and note that $0 < \alpha < 1$ and $\frac{1}{q}+\frac{1}{\tilde{q}}=\alpha$. From Minkowski's inequality, and the decay of the linear Schrödinger operator \eqref{decay}:
\begin{align}
\left\| \int_a^be^{i(t-\tau)\Delta}g(\cdot,\tau)\, d\tau\right\|_{L^{r}_x} &\leq \int_a^b\left\|e^{i(t-\tau)\Delta}g(\cdot,\tau) \right\|_{L^{r}_x}\, d\tau \\
&\lesssim   \int_a^b\frac{1}{|t-\tau|^\alpha}\left\|g(\tau)\right\|_{L_x^{r'}}\, d\tau\\
&=  \int_{-\infty}^{+\infty}\frac{1}{|t-\tau|^\alpha}\chi_{[a,b]}(\tau)\left\|g( \tau)\right\|_{L_x^{r'}}\, d\tau\\
&=  I_{1-\alpha}\left(\chi_{[a,b]}\left\|g\right\|_{L^{r'}_x} \right)(t).
\end{align}

From the Hardy-Littlewood-Sobolev Theorem, we get
\begin{equation}\label{Interp1}
    \left\| \int_a^be^{i(t-\tau)\Delta}g(\cdot,\tau)\, d\tau\right\|_{L^q_t L^{r}_x} \lesssim \left\|\chi_{[a,b]}\left\|g\right\|_{L^{r'}_x}\right\|_{L^{\tilde{q}'}_t} =
    \left\|g\right\|_{L^{\tilde{q}'}_{[a,b]}L^{r'}_x}. 
\end{equation}
In particular, if $s=0$, then $q = \tilde{q}$ and 
 \begin{equation}\label{Interp_s_0}
 \left\| \int_a^be^{i(t-\tau)\Delta}g(\cdot,\tau)\, d\tau\right\|_{L^q_t L^{r}_x} \lesssim  \left\|g\right\|_{L^{q'}_{[a,b]}L^{r'}_x} . 
 \end{equation}
 
Note that \eqref{Interp_s_0} also immediately holds in the case $(s,q,r) = (0,\infty,2)$. Now observe that, if $s=0$ and $g \in C^\infty_0(\mathbb{R}^{N+1})$,

\begin{align}
    \left\| \int_a^be^{i(t-\tau)\Delta}g(\cdot,\tau)\, d\tau\right\|_{L^{2}_x}^2  &= \int \int_a^be^{i(t-\tau)\Delta}g(\cdot,\tau)\, d\tau  \overline{\int_a^be^{i(t-\tau')\Delta}g(\cdot,\tau')\, d\tau'}\, dx\\
    &=\int \int_a^b g(\cdot,\tau)  \overline{\int_a^be^{i(\tau-\tau')\Delta}g(\cdot,\tau')\, d\tau'}\, d\tau\,dx\\
    &\leq \int_a^b \|g(\tau)\|_{L^{r'}_x} \left\|\int_a^be^{i(\tau-\tau')\Delta}g(\cdot,\tau')\, d\tau'\right\|_{L^r_x} \, d\tau\\
    &\leq \|g\|_{L^{q'}_{[a,b]}L^{r'}_x} \left\|\int_a^be^{i(\tau-\tau')\Delta}g(\cdot,\tau')\, d\tau'\right\|_{L^q_\tau L^r_x}\\
    &\label{Interp2}\lesssim \|g\|_{L^{q'}_{[a,b]}L^{r'}_x}^2.
\end{align}
 
Therefore, as in Kato \cite[Theorem 2.1]{Kato94}, we can interpolate \eqref{Interp1} and \eqref{Interp2} and use a density argument to obtain \eqref{lit}.
\end{proof}

\subsection{Other useful estimates}

We start recalling a couple of useful estimates for radial functions. The first one is the so-called Strauss lemma. The second estimate is a Gagliardo-Nirenberg-type estimate, which is an immediate consequence of the first inequality.

\begin{lem}[Strauss \cite{Strauss77}] If $f \in H^1_{rad}(\mathbb{R}^N)$, $N\geq 2$, then, for any $R>0$,
\begin{equation}\label{Strauss}
	\|f\|_{L^\infty_{\left\{|x|\geq R\right\}}} \lesssim R^{-\frac{N-1}{2}}\|f\|_{H^1}.
\end{equation}
\end{lem}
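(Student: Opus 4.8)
The plan is to reduce everything to a one-dimensional computation in the radial variable and then close with a weighted Cauchy--Schwarz estimate. Since $f$ is radial, I write $f(x) = f(r)$ with $r = |x|$, and I first establish the bound for $f$ in a dense class (say radial Schwartz functions, or $C_c^\infty$), extending to general $f \in H^1_{rad}$ by density at the very end. For such smooth, decaying $f$ and any fixed $r \geq R$, the fundamental theorem of calculus gives
\begin{equation}
|f(r)|^2 = -\int_r^\infty \frac{d}{ds}|f(s)|^2 \, ds = -2\int_r^\infty \Re\big(\overline{f(s)}\, f'(s)\big)\, ds,
\end{equation}
so that
\begin{equation}
|f(r)|^2 \leq 2\int_r^\infty |f(s)|\,|f'(s)| \, ds.
\end{equation}

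The key step is to introduce the $N$-dimensional volume weight $s^{N-1}$ in order to match the $L^2(\mathbb{R}^N)$ norms. I rewrite the integrand as $|f(s)|\,s^{\frac{N-1}{2}} \cdot |f'(s)|\,s^{\frac{N-1}{2}} \cdot s^{-(N-1)}$ and use that $s \geq r \geq R$ forces $s^{-(N-1)} \leq R^{-(N-1)}$. Pulling this factor out and applying Cauchy--Schwarz in $s$ yields
\begin{equation}
|f(r)|^2 \leq 2R^{-(N-1)}\left(\int_r^\infty |f(s)|^2 s^{N-1}\, ds\right)^{\!1/2}\!\left(\int_r^\infty |f'(s)|^2 s^{N-1}\, ds\right)^{\!1/2}.
\end{equation}
Recognizing that $\int_0^\infty |f(s)|^2 s^{N-1}\, ds$ and $\int_0^\infty |f'(s)|^2 s^{N-1}\, ds$ are, up to the surface-area constant $\omega_{N-1}$ of the unit sphere, exactly $\|f\|_{L^2}^2$ and $\|\nabla f\|_{L^2}^2$ (the latter because $|\nabla f| = |f'(r)|$ for radial functions), I conclude
\begin{equation}
|f(r)|^2 \lesssim R^{-(N-1)}\,\|f\|_{L^2}\,\|\nabla f\|_{L^2} \lesssim R^{-(N-1)}\,\|f\|_{H^1}^2.
\end{equation}

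Since the right-hand side is independent of the particular $r \geq R$, taking the supremum over $r \geq R$ and then a square root gives $\|f\|_{L^\infty_{\{|x|\geq R\}}} \lesssim R^{-\frac{N-1}{2}}\|f\|_{H^1}$, which is the claimed inequality for smooth $f$. I expect the main (and essentially only) subtlety to be justifying the boundary term at infinity, namely that $|f(s)|^2 \to 0$ as $s \to \infty$, which is what makes the fundamental-theorem-of-calculus identity legitimate; this holds trivially in the dense class and is precisely why the density argument is needed to transfer the final estimate to arbitrary $f \in H^1_{rad}$. Because the constants produced above depend only on $N$, the bound passes to the limit without loss.
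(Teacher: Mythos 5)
Your argument is correct and complete: it is the classical Strauss radial-lemma proof (fundamental theorem of calculus in the radial variable, insertion of the weight $s^{N-1}$, Cauchy--Schwarz, and a density argument), with constants depending only on $N$ as required. The paper itself gives no proof of this lemma, citing Strauss directly, and your proof coincides with the standard one from that reference, so there is nothing to compare beyond noting that your treatment of the boundary term at infinity and the passage to general $f \in H^1_{rad}$ by uniform convergence on $\{|x|\geq R\}$ is exactly the right way to fill in the details.
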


\begin{coro}\label{Radial_GN} If $f \in H^1_{rad}(\mathbb{R}^N)$, $N\geq 2$, then, for any $R>0$, \begin{equation}
\|f\|^{p+1}_{L^{p+1}_{\left\{|x|\geq R\right\}}} \lesssim R^{-\frac{(N-1)(p-1)}{2}} \|f\|_{H^1}^{p+1}.
\end{equation}
\end{coro}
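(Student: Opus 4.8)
The plan is to deduce the corollary directly from the Strauss lemma \eqref{Strauss} by splitting off two powers of $f$ and estimating them in $L^\infty$ on the exterior region. I would start from
\begin{equation}
\|f\|^{p+1}_{L^{p+1}_{\{|x|\geq R\}}} = \int_{\{|x|\geq R\}} |f|^{p+1}\, dx = \int_{\{|x|\geq R\}} |f|^{p-1}|f|^{2}\, dx,
\end{equation}
and then bound the factor $|f|^{p-1}$ by its sup norm over the exterior:
\begin{equation}
\int_{\{|x|\geq R\}} |f|^{p-1}|f|^{2}\, dx \leq \|f\|_{L^\infty_{\{|x|\geq R\}}}^{p-1} \int_{\{|x|\geq R\}} |f|^{2}\, dx \leq \|f\|_{L^\infty_{\{|x|\geq R\}}}^{p-1} \|f\|_{L^2}^2.
\end{equation}

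Next I would apply the Strauss lemma \eqref{Strauss} to the first factor, which contributes the decay in $R$: raising \eqref{Strauss} to the power $p-1$ gives $\|f\|_{L^\infty_{\{|x|\geq R\}}}^{p-1} \lesssim R^{-\frac{(N-1)(p-1)}{2}}\|f\|_{H^1}^{p-1}$. Combining this with the previous line and bounding the $L^2$ norm by the full $H^1$ norm yields
\begin{equation}
\|f\|^{p+1}_{L^{p+1}_{\{|x|\geq R\}}} \lesssim R^{-\frac{(N-1)(p-1)}{2}}\|f\|_{H^1}^{p-1}\|f\|_{L^2}^2 \lesssim R^{-\frac{(N-1)(p-1)}{2}}\|f\|_{H^1}^{p+1},
\end{equation}
which is exactly the claimed estimate. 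The exponent $p-1$ on $R$ comes naturally from the number of powers peeled off into $L^\infty$.

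Since this is a short consequence of an already-stated inequality, there is no substantial obstacle; the only thing to be careful about is the bookkeeping of exponents, namely ensuring that the $p+1$ total powers are split as $(p-1)+2$ so that exactly two powers remain for the $L^2$ integral while $p-1$ powers absorb the Strauss decay. This accounts both for the factor $R^{-(N-1)(p-1)/2}$ and for the final $\|f\|_{H^1}^{p+1}$ after using $\|f\|_{L^2}\leq\|f\|_{H^1}$.
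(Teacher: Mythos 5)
Your proof is correct and is exactly the argument the paper has in mind: the corollary is stated as an immediate consequence of the Strauss lemma, obtained by peeling off $p-1$ powers of $f$ into $L^\infty_{\{|x|\geq R\}}$ and controlling the remaining two powers by $\|f\|_{L^2}^2\leq\|f\|_{H^1}^2$. The exponent bookkeeping is right, so nothing further is needed.
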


In what follows we also use the following standard estimates.

\begin{lem}[{See Guzm\'an \cite[Section 4]{Boa}}]\label{lem_guz} Let $N > 2$, $u, v \in C^\infty_0(\mathbb{R}^{N+1})$, $1+\frac{4-2b}{N} < p < 1+\frac{4-2b}{N-2}$ and $0 \leq b <\min\{N/2,2\}$. Then there exists $0 \leq \theta = \theta(N,p,b) \ll p-1$ such that the following inequalities hold

\begin{align}
\label{L1}\||x|^{-b}|u|^{p-1}u\|_{L^\infty_IL^{r}_x} &\lesssim \|u\|_{L^\infty_I H^1_x}^p, \quad  1 \leq r < \frac{2N}{N+2},\\
\label{dual_s}\left\||x|^{-b}|u|^{p-1}u\right\|_{S'(\dot{H}^{-s_c},I)}&\lesssim\left\|u\right\|^\theta_{L^\infty_tH^1_x}\left\|u\right\|^{p-\theta}_{S(\dot{H}^{s_c},I)},\\
\label{dual_l}\left\||x|^{-b}|u|^{p-1}u\right\|_{S'\left(L^2, I\right)}&\lesssim\left\|u\right\|^\theta_{L^\infty_tH^1_x}\left\|u\right\|^{p-1-\theta}_{S\left(\dot{H}^{s_c}, I\right)}\left\|u\right\|_{S\left(L^2, I\right)},\\
\label{dual_grad_l}\left\|\nabla\left(|x|^{-b}|u|^{p-1}u\right)\right\|_{S'\left(L^2, I\right)}
&\lesssim\left\|u\right\|^\theta_{L^\infty_tH^1_x}\left\|u\right\|^{p-1-\theta}_{S
\left(\dot{H}^{s_c}, I\right)}\left\|\nabla u\right\|_{S\left(L^2, I\right)}^{}.\\
\end{align}
\end{lem}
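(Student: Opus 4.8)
The plan is to establish all four inequalities through a single device: decompose $\mathbb{R}^N = B \cup B^c$, where $B = B(0,1)$ is the unit ball, and on each region use Hölder's inequality to peel the weight $|x|^{-b}$ off the nonlinearity, placing it in a Lebesgue space in which it is integrable. Near the origin $|x|^{-b} \in L^\gamma(B)$ exactly when $b\gamma < N$, whereas away from the origin $|x|^{-b} \in L^\gamma(B^c)$ when $b\gamma > N$; the restriction $0 \le b < \min\{N/2,2\}$ provides enough room to pick such a $\gamma$ compatibly with the Strichartz and Sobolev exponents appearing below. Once the weight is removed, everything reduces to estimating powers of $u$ by Sobolev embedding and Strichartz norms.

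For \eqref{L1}, since $\bigl| |x|^{-b}|u|^{p-1}u \bigr| = |x|^{-b}|u|^p$, I would apply the split and Hölder in space to obtain, on each region, a bound of the form $\||x|^{-b}\|_{L^\gamma}\|u\|_{L^{\sigma}}^p$ for a suitable $\sigma$; the range $1 \le r < 2N/(N+2)$ is exactly what keeps $\sigma$ inside the Sobolev window $[2, 2N/(N-2)]$, so that $H^1 \hookrightarrow L^\sigma$ closes the estimate after taking the supremum in time.

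For the dual estimates \eqref{dual_s}--\eqref{dual_grad_l}, the strategy is to fix one admissible pair realizing the infimum in the relevant $S'$-norm and then perform Hölder splits in both space and time. Writing $|u|^p = |u|^\theta\,|u|^{p-\theta}$ (and, for the $L^2$-based estimates, factoring out one additional copy of $u$, i.e. $|u|^{p-1}u = |u|^\theta |u|^{p-1-\theta} u$), I would estimate the factor $|u|^\theta$ in $L^\infty_t$ of a Lebesgue space controlled by $H^1$ through Sobolev embedding, producing the prefactor $\|u\|^\theta_{L^\infty_t H^1_x}$, and place the remaining factor $|u|^{p-\theta}$ (respectively $|u|^{p-1-\theta}$ together with $u$ or $\nabla u$) in the $S(\dot H^{s_c})$ --- and, where required, the $S(L^2)$ --- Strichartz norms. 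The weight $|x|^{-b}$ is absorbed into the spatial Hölder split over $B$ and $B^c$ as above. For \eqref{dual_grad_l} one first differentiates, $\nabla(|x|^{-b}|u|^{p-1}u) = -b\,x\,|x|^{-b-2}|u|^{p-1}u + |x|^{-b}\nabla(|u|^{p-1}u)$; the only genuinely new term is the first, which carries the more singular weight $|x|^{-b-1}$, and this remains locally integrable to a suitable power because $0 \le b < \min\{N/2,2\}$ guarantees $b+1 < N$ for $N>2$, leaving an admissible Hölder exponent.

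The real work --- and the main obstacle --- is the bookkeeping of exponents. For each inequality one must exhibit Hölder exponents in space and time together with $\dot H^{s_c}$-, $\dot H^{-s_c}$-, or $L^2$-admissible Strichartz pairs that simultaneously (i) satisfy the admissibility relation $\tfrac{2}{q} = \tfrac{N}{2} - \tfrac{N}{r} - s$, (ii) keep every Lebesgue exponent inside the Sobolev range so that the embeddings apply, and (iii) leave $|x|^{-b}$ (or $|x|^{-b-1}$) integrable on each of $B$ and $B^c$. It is precisely the intercritical hypothesis $0 < s_c < 1$ together with $0 \le b < \min\{N/2,2\}$ that makes these constraints mutually consistent, and the crux is checking that the small parameter $\theta$ can be chosen with $0 \le \theta \ll p-1$ so that the leftover power $p-\theta$ (or $p-1-\theta$) still corresponds to an admissible pair. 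The detailed choice of exponents is carried out in Guzmán \cite{Boa}.
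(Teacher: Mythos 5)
Your treatment of \eqref{L1}, \eqref{dual_s} and \eqref{dual_l} follows essentially the same route as the paper: split $\mathbb{R}^N$ into the unit ball and its complement, peel off $|x|^{-b}$ by H\"older, factor $|u|^{p-1}u = |u|^{\theta}|u|^{p-1-\theta}u$ with the $\theta$-power absorbed into $L^\infty_t H^1_x$ via Sobolev embedding, and place the remaining factors in a single well-chosen admissible pair (you do not even need the pair to realize the infimum defining $S'$; exhibiting one is enough). That part is consistent with the paper's estimate \eqref{base_l_s}.

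The gap is in \eqref{dual_grad_l}, and it is twofold. First, deferring ``the detailed choice of exponents'' to Guzm\'an \cite{Boa} does not close the argument: as Remark \ref{re_guz} records, the exponent scheme in \cite{Boa} only covers $0<b<b^*$ and, in dimension $3$, only $p<4-2b$, whereas the lemma is asserted for $0\le b<\min\{N/2,2\}$ and the full intercritical range of $p$; producing exponents valid on this larger range is precisely the content of the proof being asked for. Second, and more substantively, your plan for the singular term $-b\,x|x|^{-b-2}|u|^{p-1}u$ --- absorb $|x|^{-(b+1)}$ by H\"older using $b+1<N$ --- does not by itself work: that term contains no derivative of $u$, yet the right-hand side of \eqref{dual_grad_l} is linear in $\left\|\nabla u\right\|_{S(L^2)}$, so somewhere the weight $|x|^{-1}$ must be traded for a gradient. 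If you try to estimate the leftover $|u|^{p}$ purely through $H^1$-controlled Lebesgue norms after absorbing $|x|^{-(b+1)}$, the required exponent on $u$ exceeds $2N/(N-2)$ once $p$ is near the upper endpoint of the intercritical range, and the window closes. The paper's mechanism is Hardy's inequality, $\||x|^{-1}g\|_{L^{r_2}}\lesssim\|\nabla g\|_{L^{r_2}}$ for $1<r_2<N$, applied to $g=|u|^{p-1}u$, which converts the singular term into $\||u|^{p-1}\nabla u\|_{L^{r_2}}$, i.e.\ the same shape as the benign term $|x|^{-b}|u|^{p-1}|\nabla u|$; the remaining (nontrivial) bookkeeping is the choice of $r_1$, with a small $\theta$-dependent shift that differs on $B$ and on $B^c$, ensuring simultaneously that $\||x|^{-b}\|_{L^{r_1}_A}<\infty$ and that $1<r_2<N$ so Hardy applies. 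Without Hardy's inequality, or an equivalent Sobolev trade of $|x|^{-1}$ for one derivative on a single factor of $u$, the gradient estimate does not follow from local integrability of $|x|^{-b-1}$ alone.
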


\begin{proof}
Inequality \eqref{L1} follows immediately from H\"older and Sobolev inequalities. To prove the remaining inequalities, consider the exponents
\begin{align}
    \hat{q} = \frac{4(p-1)(p+1)}{(p-1)[N(p-1)+2b]-\theta[N(p-1)-4+2b]},\,\,
    \hat{r} = \frac{N(p-1)(p+1)}{(p-1)(N-b)-\theta(2-b)},\\
    \tilde{a} = \frac{2(p-1)(p+1-\theta)}{(p-1)[N(p-\theta)-2+2b)]-(4-2b)(1-\theta)},\,\,
    \hat{a} = \frac{2(p-1)(p+1-\theta)}{4-2b-(N-2)(p-1)}.
\end{align}
Choosing $\theta = 0$ if $b = 0$, and $0<\theta \ll 1$ if $b > 0$, we have that $(\hat{q},\hat{r}) \in \mathcal{A}_0$, $(\hat{a},\hat{r}) \in \mathcal{A}_{s_c}$ and $(\tilde{a},\hat{r}) \in \mathcal{A}_{-s_c}$. By H\"older and Sobolev inequalities (see \cite[Lemmas 4.1 and 4.2]{Boa} for details), we have
\begin{equation}\label{base_l_s}
    \||x|^{-b}|u|^{p-1}v\|_{L^{\hat{r}'}} \lesssim \|u\|_{H^1_x}^\theta\|u\|^{p-1-\theta}_
    {L^{\hat{r}_x}}\|v\|_{L^{\hat{r}}_x},
\end{equation}
so that \eqref{dual_s} and \eqref{dual_l} follow. 

Consider now \eqref{dual_grad_l}. If $b = 0$, then it  follows directly from \eqref{base_l_s}. For $b > 0$, define the pairs
\begin{align}
    \bar{q} &= \frac{4(p-1)(p-\theta)}{(p-1)[N(p-1)+2b-2]-\theta[N(p-1)-4+2b]},\\
    \bar{r} &= \frac{2N(p-1)(p-\theta)}{(p-1)(N+2-2b)-\theta(4-2b)},\\
    \bar{a} &= \frac{4(p-1)(p-\theta)}{4-2b-(N-2)(p-1)}.
\end{align}

It is immediate to check that $(2,2N/(N-2)$, $(\bar{q},\bar{r}) \in \mathcal{A}_0$, and that $(\bar{a}, \bar{r})\in \mathcal{A}_{s_c}$. Let $B$ be the unit ball centered at the origin, $B^c = \mathbb{R}^N\backslash B$ and let $A$ denote $B$ or $B^c$. Since
\begin{equation}
    |\nabla(|x|^{-b}|u|^{p-1}u)| \lesssim |x|^{-b}|u|^{p-1}|\nabla u| + |x|^{-b}|x|^{-1}(|u|^{p-1}|u|),
\end{equation}
we estimate, by H\"older inequality
\begin{equation}\label{eq_scat_lem_guz}
    \|\nabla(|x|^{-b}|u|^{p-1}u)\|_{L^{\frac{2N}{N+2}}_A} \lesssim \| |x|^{-b}\|_{L^{r_1}_A}\left(\||u|^{p-1}\nabla u\|_{L^{r_2}} + \||x|^{-1}|u|^{p-1}u\|_{L^{r_2}}\right),
\end{equation}
where we choose
\begin{equation}
     \frac{1}{r_1} = \frac{b}{N}+l, \text{ with } l := \begin{cases}
        \frac{\theta(1-s_c)}{N}, &\text{ if } A = B,\\
        -\frac{\theta s_c}{N}, &\text{ if } A = B^c,
    \end{cases}
\end{equation}
and
    \begin{equation}
        \frac{1}{r_2} = \frac{N+2}{2N}-\frac{1}{r_1}.
    \end{equation}
Since $1 < \frac{2N}{N+2-2b} < N$ for $N > 2$ and $0 < b < N/2$, if we choose $\theta$ (and thus $l$) small enough, we conclude that $\||x|^{-b}\|_{L^{r_1}_A} < +\infty$, and that $1 < r_2 < N$. In view of Hardy's inequality (see \cite{kufner1990hardy}),
\begin{equation}
    \int |\nabla f|^r \geq \left(\frac{N-r}{r}\right)^r\int \frac{|f|^r}{|x|^r}, \quad f \in W^{1,r}(\mathbb{R}^N), \,1 < r < N,
\end{equation}
we have
\begin{equation}
  \||x|^{-1}|u|^{p-1}u\|_{L^{r_2}} \lesssim  \|\nabla (|u|^{p-1} u)\|_{L^{r_2}} \lesssim \||u|^{p-1}\nabla u\|_{L^{r_2}}.
\end{equation}

Therefore, \eqref{eq_scat_lem_guz} becomes
\begin{equation}
    \|\nabla(|x|^{-b}|u|^{p-1}u)\|_{L^{\frac{2N}{N+2}}} \lesssim \||u|^{p-1}\nabla u\|_{L^{r_2}}.
\end{equation}

Now, by splitting 
\begin{equation}
     \frac{1}{r_2} = \underbrace{\theta\left(\frac{1}{2}-\frac{s_c}{N} \right)-l}_{\frac{1}{r_3}}
     +\underbrace{\frac{p-1-\theta}{\bar{r}}}_{\frac{1}{r_4}} 
     + \underbrace{\frac{1}{\bar{r}}}_{\frac{1}{r_5}},
\end{equation}
it is easy to see that $2 \leq \theta r_3 \leq 2N/(N-2)$. By H\"older and Sobolev inequalities
\begin{equation}
    \||u|^{p-1}\nabla u\|_{L^{r_2}} \lesssim \|u\|_{L^{\theta r_2}}^{\theta}\|u\|_{L^{\bar{r}}}^{p-\theta}\|\nabla u\|_{L^{\bar{r}}} \lesssim \|u\|_{H^1}^{\theta}\|u\|_{L^{\bar{r}}}^{p-\theta}\|\nabla u\|_{L^{\bar{r}}}.
\end{equation}
Therefore, by H\"older inequality on the time variable:
\begin{equation}
    \|\nabla (|x|^{-b}|u|^{p-1}u)\|_{L^2_t L_x^{\frac{2N}{N+2}}} \lesssim \|u\|_{L^\infty_t H^1_x}^{\theta}\|u\|_{L^{\bar{a}}_t L^{\bar{r}}_x}^{p-1-\theta} \|\nabla u\|_{L^{\bar{q}}_t L^{\bar{r}}_x},
\end{equation}
which finishes the proof of the lemma.
\end{proof}

\begin{re}\label{re_guz}
Inequalities \eqref{dual_s}-\eqref{dual_grad_l} were proved in \cite{Boa} for $0 < b < b^*$ and with the additional restriction $p < 4-2b$ instead of $p < 5-2b$ in the 3d case. The proof we give here extends the range of $p$ and $b$ to the whole range where local well-posedness is proved. We expect that Lemma \ref{lem_guz} can be used to extend the results in \cite{Boa} using the concentration-compactness-rigidity tecnique.
\end{re}
The next lemma was proved in \cite{Boa} with the same restrictions mentioned in Remark \ref{re_guz}. In view of Lemma \ref{lem_guz}, the proof in \cite{Boa} immediately extends to the new range of $p$ and $b$.
\begin{lem}[{Small data theory, see Guzm\'an \cite[Theorem 1.8]{Boa}}]\label{small_data} Let $N \geq 1$, $1+\frac{4-2b}{N} < p < 1+\frac{4-2b}{N-2}$ and $0 \leq b <\min\{N/2,2\}$. Suppose $\|u_0\|_{H^1} \leq E$. Then there exists $\delta_{sd} = \delta_{sd}(E) > 0$ such that if 
\begin{equation}
	\|e^{it\Delta}u_0\|_{S(\dot{H}^{s_c},[0,+\infty))} \leq \delta_{sd},
\end{equation}

then the solution $u$ to \eqref{INLS} with initial condition $u_0 \in H^1(\mathbb{R}^N)$ is globally defined on $[0,+\infty)$. Moreover,
\begin{equation}
		\|u\|_{S(\dot{H}^{s_c},[0,+\infty))} \leq 2 	\|e^{it\Delta}u_0\|_{S(\dot{H}^{s_c},[0,+\infty))},
\end{equation}
and
\begin{equation}
	\|u\|_{S(L^2, [0,+\infty))}+\|\nabla u\|_{S(L^2, [0,+\infty))} \lesssim \|u_0\|_{H^1}.
\end{equation}
\end{lem}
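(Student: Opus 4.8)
The plan is to run a standard contraction-mapping argument on the Duhamel operator, the only new ingredient being that the nonlinear estimates \eqref{dual_s}--\eqref{dual_grad_l}, now established in Lemma \ref{lem_guz} for the full range of $p$ and $b$, replace those used by Guzm\'an. Write $I = [0,+\infty)$ and set $a := \|e^{it\Delta}u_0\|_{S(\dot H^{s_c},I)} \le \delta_{sd}$. I would work with the map
\begin{equation}
\Phi(u)(t) = e^{it\Delta}u_0 + i\int_0^t e^{i(t-\tau)\Delta}\left(|x|^{-b}|u|^{p-1}u\right)(\tau)\, d\tau,
\end{equation}
and the complete metric space
\begin{equation}
B = \left\{ u : \|u\|_{S(\dot H^{s_c},I)} \le 2a,\ \ \|u\|_{S(L^2,I)} + \|\nabla u\|_{S(L^2,I)} \le 2CE \right\},
\end{equation}
with distance $d(u,v) = \|u-v\|_{S(\dot H^{s_c},I)} + \|u-v\|_{S(L^2,I)}$, where $C$ absorbs the constants of the linear estimates \eqref{S1}--\eqref{S2}. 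The first observation is that $(\infty,2)$ is $L^2$-admissible, so $\|u\|_{L^\infty_t H^1_x} \lesssim \|u\|_{S(L^2,I)} + \|\nabla u\|_{S(L^2,I)}$; this lets the $L^\infty_t H^1_x$ factors in Lemma \ref{lem_guz} be absorbed into the defining bounds of $B$.

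Next I would verify that $\Phi$ maps $B$ into itself. Applying $\|\cdot\|_{S(\dot H^{s_c},I)}$ to $\Phi(u)$, using the Kato--Strichartz estimate \eqref{KS2} and then \eqref{dual_s}, gives
\begin{equation}
\|\Phi(u)\|_{S(\dot H^{s_c},I)} \le a + C\|u\|_{L^\infty_t H^1_x}^\theta \|u\|_{S(\dot H^{s_c},I)}^{p-\theta} \le a + C'(2CE)^\theta (2a)^{p-\theta}.
\end{equation}
Since $p-\theta > 1$, the last term is $\le a$ once $a \le \delta_{sd}(E)$ is small, giving $\|\Phi(u)\|_{S(\dot H^{s_c},I)} \le 2a$. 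Similarly \eqref{S1}, \eqref{S2}, the Kato--Strichartz estimate \eqref{KS1}, and the estimates \eqref{dual_l}--\eqref{dual_grad_l} yield
\begin{equation}
\|\Phi(u)\|_{S(L^2,I)} + \|\nabla\Phi(u)\|_{S(L^2,I)} \le CE + C''(2CE)^{1+\theta}(2a)^{p-1-\theta},
\end{equation}
and again, because $p-1-\theta > 0$, the nonlinear term is $\le CE$ for $a$ small, so $\Phi(u)\in B$. The crucial structural point is that smallness is demanded only in the scaling-critical norm $\|\cdot\|_{S(\dot H^{s_c})}$, while the $L^2$-based norms are merely bounded by $E$; this is exactly why the hypothesis is phrased in terms of $\|e^{it\Delta}u_0\|_{S(\dot H^{s_c})}$ rather than $\|u_0\|_{H^1}$.

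The contraction estimate is done in the same fashion, using the pointwise bound $\big||u|^{p-1}u - |v|^{p-1}v\big| \lesssim (|u|^{p-1}+|v|^{p-1})|u-v|$ together with the difference analogues of \eqref{dual_s} and \eqref{dual_l}, proved exactly as in Lemma \ref{lem_guz}, to obtain $d(\Phi(u),\Phi(v)) \le \tfrac12 d(u,v)$ for $a$ small. Banach's fixed point theorem then produces a unique $u \in B$ with $\Phi(u)=u$, which is the solution of \eqref{INLS}; finiteness of its Strichartz norms on all of $I$ precludes finite-time breakdown, so $u$ is global on $[0,+\infty)$, and the asserted bounds are precisely the defining inequalities of $B$. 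Since the genuinely delicate analysis—taming the inhomogeneity $|x|^{-b}$ in the dual Strichartz norms—is already packaged into Lemma \ref{lem_guz}, the only real obstacle here is bookkeeping: choosing $\theta$ and the radius constants so that the three norms close simultaneously, and checking that smallness of $a$ alone (not of $E$) suffices. As noted, once Lemma \ref{lem_guz} is available in the enlarged parameter range, the argument is identical to the one in Guzm\'an \cite{Boa}.
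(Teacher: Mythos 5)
Your argument is correct and is essentially the paper's: the paper gives no independent proof of Lemma \ref{small_data}, but simply observes that Guzm\'an's contraction-mapping proof in \cite{Boa} carries over verbatim once the nonlinear estimates \eqref{dual_s}--\eqref{dual_grad_l} are available in the enlarged range of $p$ and $b$, which is exactly the fixed-point scheme (smallness only in the critical $S(\dot H^{s_c})$ norm, boundedness in the $L^2$-based norms) that you write out. No gaps; the details you supply match the standard argument being invoked.
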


\section{Proof of the scattering criterion}
We start this section with a remark.
\begin{re}\label{delta}
Under Definition \ref{As}, there exists a small $\delta >0$ (possibly depending on $N$, $p$, $s$ and $b$) such that, for a fixed $0 < s < 1$
$$
2 + \delta \leq r \leq p^*-\delta, \text{ and } 
$$
$$
 2+\delta \leq \frac{2}{1-s} < q \leq \frac{1}{\delta} ,
$$
for any pair $(q,r) \in \mathcal{A}_s$. 
\end{re}
 For $N > 2$, fix the parameters $$\alpha = {\frac{\delta(2+\delta)}{(p^*-\delta)(p^*-2)}} >0$$ and $$\gamma = \min\left\{{\frac{\delta(p-\theta)}{(p^*-\delta)(p^*-2)}}, \frac{\alpha(N-2) }{4}\right\}>0,$$ 
 Where $0 \leq \theta \ll p-1$ is given in Lemma \ref{lem_guz}. The following result is the key to prove Theorem \ref{scattering_criterion}.
 \begin{lem}\label{linevo}
Let  $N > 2$, $1+\frac{4-2b}{N} < p < 1+\frac{4-2b}{N-2}$, $0 \leq b < 2$ and $u$ be a radial $H^1(\mathbb{R}^N)$-solution to \eqref{INLS} satisfying \eqref{E}. If $u$ satisfies \eqref{scacri} for some $0 < \epsilon < 1$, then there exists $T > 0$ such that the following estimate is valid
\begin{equation}
\left\|e^{i(\cdot-T)\Delta}u(T)\right\|_{S\left(\dot{H}^{s_c}, [T, +\infty)\right)}  \lesssim\epsilon^\gamma.
\end{equation}
\end{lem}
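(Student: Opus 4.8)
The plan is to estimate the \emph{linear} flow of the datum $u(T)$ directly, exploiting the concentration hypothesis \eqref{scacri} together with the dispersive decay \eqref{decay} and the radial estimates \eqref{Strauss} and Corollary~\ref{Radial_GN}; no Duhamel term or nonlinear estimate is needed for this particular lemma. First I would fix $T$ to be a time far out along the sequence furnished by the $\liminf$ in \eqref{scacri}, at which $\int_{B(0,R)}|u(T)|^2\,dx\le\epsilon^2$. Since the statement only asserts the existence of one such $T$ and every implicit constant below depends on $E,N,p,b,R$ but not on $T$, any such time works. It then suffices to bound $\|e^{i(t-T)\Delta}u(T)\|_{L^q_{[T,\infty)}L^r_x}$ uniformly over $(q,r)\in\mathcal{A}_{s_c}$, for which Remark~\ref{delta} guarantees a uniform slack $\delta$ keeping $q,r$ away from the endpoints.

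I would run a splitting in both space and time. In space, write $u(T)=v+w$ with $v=u(T)\chi$ (a smooth cutoff adapted to $B(0,R)$) and $w=u(T)-v$; in time, split $[T,\infty)=[T,T+\tau]\cup[T+\tau,\infty)$ for a parameter $\tau$ to be optimised as a power of $\epsilon$. For the interior piece $v$, the concentration gives $\|v\|_{L^2}\lesssim\epsilon$, while $\|v\|_{\dot H^1}\lesssim E$ (the cutoff contributes $\lesssim E/R$); combining \eqref{S2} with the interpolation inequality $\|v\|_{\dot H^{s_c}}\le\|v\|_{L^2}^{1-s_c}\|v\|_{\dot H^1}^{s_c}$ yields $\|e^{i(t-T)\Delta}v\|_{S(\dot H^{s_c},[T,\infty))}\lesssim\epsilon^{1-s_c}E^{s_c}$, already of the desired form and valid on all of $[T,\infty)$. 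For the near-time part of the exterior piece $w$, I would use \eqref{S2} together with Hölder in time: fixing $(q,r)\in\mathcal{A}_{s_c}$ and comparing with the $\dot H^{\tilde s}$-admissible pair $(\tilde q,r)$ having the same $r$ and a slightly larger regularity $\tilde s\in(s_c,1)$ (hence $\tilde q>q$), Hölder on $[T,T+\tau]$ loses a factor $\tau^{1/q-1/\tilde q}$ while $\|e^{i(t-T)\Delta}w\|_{L^{\tilde q}_tL^r_x}\lesssim\|w\|_{\dot H^{\tilde s}}\lesssim E$; the slack $\delta$ makes $1/q-1/\tilde q$ uniformly positive, so this part is $\lesssim\tau^{c}E$ with $c>0$.

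The heart of the matter, and the step I expect to be the main obstacle, is the far-time evolution of the exterior piece $w$ on $[T+\tau,\infty)$, where the mass is \emph{not} small — indeed the concentration forces almost all of it to sit in $w$. Here I would invoke the dispersive decay \eqref{decay}, whose $L^q_t$ norm over $[T+\tau,\infty)$ gains a negative power of $\tau$, paired against a spatial factor which one would like to be a low-order Lebesgue norm $\|w\|_{L^{r'}_x}$. The genuine difficulty is that a radial $H^1$ function need not lie in $L^{r'}_x$ for the conjugate exponents $r'<2$ that appear, so the naive dispersive bound diverges on the unbounded exterior; this is precisely the low-dimensional obstruction alluded to after the statement of Theorem~\ref{scattering_criterion}. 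The resolution is to convert the $L^\infty$ and $L^{p+1}$ smallness of $w$ on $\{|x|>R\}$ supplied by \eqref{Strauss} and Corollary~\ref{Radial_GN} (a negative power of $R$) into control of the relevant spatial norm, the sufficient time decay of $e^{it\Delta}$ in dimension $N>2$ making the $t$-integral converge and producing a factor $\tau^{-c'}R^{-\sigma}$. Finally I would choose $\tau$ and $R$ as suitable powers of $\epsilon$ to balance the three contributions $\epsilon^{1-s_c}$, $\tau^{c}E$ and $\tau^{-c'}R^{-\sigma}$, arriving at the bound $\epsilon^{\gamma}$ with $\gamma$ equal to the minimum in the definition preceding the lemma.
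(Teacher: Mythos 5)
Your plan discards Duhamel entirely and tries to prove the lemma as a statement about the free evolution of the single profile $u(T)$; this is where it breaks. The decisive step is the far-time evolution of the exterior piece $w$, and the ``resolution'' you sketch there does not exist: the dispersive estimate \eqref{decay} requires $\|w\|_{L^{r'}_x}$ with $r'<2$, and neither the Strauss bound \eqref{Strauss} nor Corollary~\ref{Radial_GN} can produce it. Pointwise decay $|w(x)|\lesssim |x|^{-(N-1)/2}$ gives $\int_{|x|\ge R}|w|^{r'}\lesssim\int_R^\infty \rho^{-(N-1)r'/2+N-1}\,d\rho$, which diverges precisely because $r'<2<\frac{2N}{N-1}$; and smallness of $\|w\|_{L^\infty}$ or $\|w\|_{L^{p+1}}$ on the exterior cannot be ``converted'' into a bound on a \emph{lower} Lebesgue norm over an unbounded region --- Hölder only goes the other way there. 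Restricting to the endpoint-free pairs of Remark~\ref{delta} does not help, since every $r>2$ still has $r'<2$. Nor can Strichartz \eqref{S2} substitute: it gives $\|e^{i(t-T)\Delta}w\|_{S(\dot H^{s_c},[T+\tau,\infty))}\lesssim\|w\|_{\dot H^{s_c}}\approx E$ with no gain from the time restriction, because the free flow of $w$ carries no memory of $T$. Indeed the lemma is not true for arbitrary radial $H^1$ data with small mass in a ball; it genuinely uses that $u$ solves \eqref{INLS}.

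The paper's proof is built exactly around this obstruction. Writing $e^{i(t-T)\Delta}u(T)=e^{it\Delta}u_0+F_1+F_2$ via Duhamel, the below-$L^2$ spatial norm needed for the dispersive estimate is taken of the \emph{nonlinearity} $|x|^{-b}|u|^{p-1}u$, which does lie in $L^{\frac{2N}{N+2+4\delta}}_x$ by \eqref{L1} (a high power of an $H^1$ function is integrable at low exponents, unlike $u$ itself); the smallness of the distant past $F_2$ then comes from the time separation $|t-s|\ge\epsilon^{-\alpha}$, after interpolating against an $L^2$-admissible norm controlled by \eqref{S1}. The recent past $F_1$ is handled by the near-conservation of localized mass (the derivative of $\int\eta_R|u|^2$ is $O(1/R)$, so smallness at the single time $T$ persists on the whole window $[T-\epsilon^{-\alpha},T]$), the interpolation/Strauss argument you describe for the interior/exterior split, and the nonlinear estimate \eqref{dual_s} fed into the local-in-time Strichartz estimate \eqref{lit}. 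Your interior bound $\epsilon^{1-s_c}E^{s_c}$ and your use of the radial decay are in the right spirit --- they reappear in the paper's Step~1 --- but they are applied there to $u$ on a time interval inside a Duhamel integral, not to the free flow of $u(T)$. Without reinstating Duhamel and the nonlinear estimates of Lemma~\ref{lem_guz}, the argument cannot be completed.
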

 \begin{proof}
 From \eqref{S2}, there exists $T_{0} > \epsilon^{-\alpha}$ such that
\begin{equation}\label{T0}
\left\|e^{it\Delta}u_0\right\|_{S\left(\dot{H}^{s_c}, [T_0,+\infty) \right)} \leq \epsilon^{\gamma}.
\end{equation}

For $T\geq T_0$ to be chosen later, define  $I_1 :=\left[T-\epsilon^{-\alpha}, T\right]$, $I_2 := [0, T-\epsilon^{-\alpha}]$  and let $\eta$ denote a smooth, spherically symmetric function which equals $1$ on $B(0, 1/2)$ and $0$ outside $B(0,1)$. For any $R > 0$ use $\eta_R$ to denote the rescaling $\eta_R(x) := \eta(x/R)$. 

From Duhamel's formula
\begin{equation}
    u(T) = e^{iT\Delta}u_0 + \int_0^{T}e^{i(T-s)\Delta}|x|^{-b}|u|^{p-1}u(s) \, ds,
\end{equation}
we obtain
\begin{equation}
e^{i(t-T)\Delta}u(T)  = e^{it\Delta}u_0 + F_1 + F_2,
\end{equation}
where, for $i = 1,2,$
$$
F_i = \int_{I_i} e^{i(t-s)\Delta}|x|^{-b}|u|^{p-1}u(s) \,ds.
$$
We refer to $F_1$ as the ``recent past", and to $F_2$ as the ``distant past". By \eqref{T0}, it remains to estimate $F_1$ and $F_2$.

\textbf{Step 1. Estimate on recent past.}

By hypothesis \eqref{scacri}, we can fix $T\geq T_0$ such that
\begin{equation}\label{mass}
\int \eta_R(x)\left|u(T,x)\right|^2dx\lesssim \epsilon^2.
\end{equation}

Given the relation  (obtained by multiplying \eqref{INLS} by $\eta_R\bar{u}$ , taking the imaginary part and integrating by parts, see Tao \cite[Section 4]{Tao_Scat} for details)

$$
\partial_t\int \eta_R|u|^2\, dx = 2\Im\int\nabla\eta_R \cdot \nabla u \bar{u},
$$
we have, from \eqref{E}, for all times,
$$
\left| \partial_t \int \eta_R(x)|u(t,x)|^2dx\right| \lesssim \frac{1}{R},
$$

so that,  by \eqref{mass}, for $t \in I_1$,
\begin{equation}
    \int \eta_R(x)\left|u(t,x)\right|^2dx\lesssim \epsilon^2+\frac{\epsilon^{-\alpha}}{R}.
\end{equation}

If $R > \epsilon^{-(\alpha+2)}$, then we have $\left\| \eta_Ru\right\|_{L^\infty_{I_1}L^2_x} \lesssim 
\epsilon
$.

Let  $(q, r)\in \mathcal{A}_{s_c}$ . Recalling that $2 + \delta \leq r \leq p^*-\delta$ (see Remark \ref{delta}), using interpolation and Sobolev inequalities and the decay of the $L^\infty$ norm of radial functions outside the ball \eqref{Strauss}, we get
\begin{align}\label{I1}
\|u\|_{L^\infty_{I_1}L^{r}_x}
&\lesssim  \left\| \eta_Ru\right\|^{\frac{2\left(p^*-r\right)}{r\left(p^*-2\right)}}_{L^\infty_{I_1}L^2_x} \left\| \eta_R u\right\|_{L^\infty_{I_1}L^{p^*}_x}^{1-\frac{2\left(p^*-r\right)}{r\left(p^*-2\right)}} + \left\|(1-\eta_R)u\right\|^{\frac{r-2}{r}}_{L^\infty_{I_1}L_x^\infty}\left\|(1-\eta_R)u\right\|^\frac{2}{r}_{L^\infty_{I_1}L^2_x}
\nonumber\\
&\lesssim  \epsilon^{\frac{2\left(p^*-r\right)}{r\left(p^*-2\right)}}
\left\| u\right\|_{L^\infty_{I_1}L^{p^*}_x}^{1-\frac{2\left(p^*-r\right)}{r\left(p^*-2\right)}} + R^{-\frac{N-1}{2}\left( \frac{r-2}{r}\right)}\|u\|_{L_t^{\infty}H_x^1}^{\left( \frac{r-2}{r}\right)}\left\|u_{0}\right\|^\frac{2}{r}_{L^2_x}\\
&\lesssim \epsilon^{\frac{2\delta}{(p^*-\delta)(p^*-2)}}+ R^{-\frac{N-1}{2} \frac{\delta}{p^*-\delta}}
\lesssim \epsilon^{\frac{2\delta}{(p^*-\delta)(p^*-2)}},
\end{align}

if $R$ is large enough.
Note that, in the penultimate step, we used the $H^1\hookrightarrow L^{p^*}$ embedding. Using the local-in-time Strichartz estimate \eqref{lit}, together with estimates \eqref{dual_s} and \eqref{I1}, we bound

\begin{align}
\left\| \int_{I_1} e^{i(t-s)\Delta}|x|^{-b}|u|^{p-1}u(s) \,ds\right\|_{S(\dot{H}^{s_c},[T,+\infty))} &\leq ||\, |x|^{-b}|u|^{p-1}u ||_{S'(\dot{H}^{-s_c},I_1)}\\
&\hspace{-5cm}\leq  \|u\|^\theta_{L^\infty_tH_x^1} \|u\|^{p-\theta}_{S(\dot{H}^{s_c},I_1)}= \left\|u\right\|^\theta_{L^\infty_tH_x^1} \,\,  \sup_{(q,r)\in \mathcal{A}_{s_c}}\|u\|^{p-\theta}_{L^{q}_{I_1}L^{r}_x}\\
&\hspace{-5cm}\leq \left\|u\right\|^\theta_{L^\infty_tH_x^1} \,\,  \sup_{2+\delta \leq r \leq p^*-\delta}\|u\|^{p-\theta}_{L^{\infty}_{I_1}L^{r}_x}\epsilon^{-\alpha\left(\frac{p-\theta}{q}\right)}\\
&\hspace{-5cm}\leq   \left\|u\right\|^\theta_{L^\infty_tH_x^1} 
\epsilon^{\frac{2\delta}{(p^*-\delta)(p^*-2)}
({p-\theta})}\epsilon^{-\alpha\left(\frac{p-\theta}{2+\delta}\right)}\lesssim \epsilon^{\frac{\delta(p-\theta)}{(p^*-\delta)(p^*-2)}} 
,
\end{align}
where we used the definition of $\alpha >0$ and the fact that $q \geq 2+\delta$.

\textbf{Step 2. Estimate on distant past.}

Let  $(q, r) \in \mathcal{A}_{s_c}$. 
Define
\begin{equation}
	\frac{1}{c} = \left(\frac{1}{1-s_c}\right)\left[\frac{1}{q}-\delta s_c\right]
\end{equation}
and
\begin{equation}
    \frac{1}{d} = \left(\frac{1}{1-s_c}\right)\left[\frac{1}{r}-s_c\left(\frac{N-2-4\delta}{2N}\right)\right]
\end{equation}

We claim that $(c,d) \in \mathcal{A}_0$. Indeed, it is immediate to check that $(c,d)$ satisfies \eqref{hs_adm_eq} with $s=0$. Moreover, since 
\begin{equation}
    q > \frac{2}{1-s_c},
\end{equation}
we see, since $\delta > 0$ is small, that $2 < c < +\infty$, so that the pair $(c,d)$ is $L^2$-admissible. We have

$$\left\|F_2\right\|_{L_{[T,+\infty)}^{q}L_x^{r}}  
\leq 
\left\|F_2\right\|^{1-s_c}_{L_{[T,+\infty)}^{c}L_x^{d}}
\left\|F_2\right\|^{s_c}_{L_{[T,+\infty)}^{\frac{1}{\delta}}L_x^{\frac{2N}{N-2-4\delta}}}.
$$ Using Duhamel's principle, write
$$
F_2 = e^{it\Delta}\left[e^{i(-T+\epsilon^{-\alpha})\Delta}u(T-\epsilon^{-\alpha})-u(0)\right].
$$

Thus, by the Strichartz estimate \eqref{S1},
\begin{align}
\left\| F_2\right\|_{L_{[T,+\infty)}^{q}L_x^{r}}
&\leq\left\| e^{it\Delta}\left[e^{i(-T+\epsilon^{-\alpha})\Delta}u(T-\epsilon^{-\alpha})-u(0)\right]\right\|^{1-s_c}_{L_{[T,+\infty)}^{c}L_x^{d}}
\left\|F_2\right\|^{s_c}_{L_{[T,+\infty)}^{\frac{1}{\delta}}L_x^{\frac{2N}{N-2-4\delta}}}\\
&\leq\left(\left\|u\right\|_{L^\infty_t L^2_x}\right)^{1-s_c} \left\|F_2\right\|^{s_c}_{L_{[T,+\infty)}^{\frac{1}{\delta}}L_x^{\frac{2N}{N-2-4\delta}}}
\lesssim \epsilon^{\alpha \delta s_c},
\end{align}

since, by \eqref{decay} and \eqref{L1},

\begin{align}
\left\|F_2\right\|_{L_{[T,+\infty)}^{\frac{1}{\delta}}L_x^{\frac{2N}{N-2-4\delta}}}
&\lesssim \left\|\int_{I_2} |\cdot-s|^{-(1+2\delta)}\left\| |x|^{-b} |u|^{p-1}u(s) \right\|_{L^{\frac{2N}{N+2+4\delta}}_x}\, ds\right\|_{L_{[T,+\infty)}^{\frac{1}{\delta}}}\\
&\lesssim \|u\|_{L_{[T,+\infty)}^\infty H_x^1}^{p}\left\|\left(\cdot -T+\epsilon^{-\alpha}\right)^{-2\delta}\right\|_{L_{[T,+\infty)}^{\frac{1}{\delta}}}\\
&\lesssim \epsilon^{\alpha\delta}.
\end{align}

Therefore, recalling that

$$
e^{i(t-T)\Delta}u(T) = e^{it\Delta}u_0 + F_1 + F_2,
$$
we have


$$
\left\|e^{i(\cdot-T)\Delta} u(T)\right\|_{S\left(\dot{H}^{s_c}, [T,+\infty) \right)}  \lesssim\epsilon^\gamma.
$$


Hence, Lemma \ref{linevo} is proved.
\end{proof}
\begin{proof}[Proof of Theorem \ref{scattering_criterion}]

Choose $\epsilon$ is small enough so that, by Lemma \ref{linevo}, $$\left\|e^{i(\cdot)\Delta}u(T)\right\|_{S\left(\dot{H}^{s_c}, [0, +\infty)\right)}  = \left\|e^{i(\cdot-T)\Delta} u(T)\right\|_{S\left(\dot{H}^{s_c}, [T, +\infty)\right)} \leq c\epsilon^\gamma\leq \delta_{sd},
 $$
 
 where $\delta_{sd}$ is given in Lemma \ref{small_data}. Thus, by small data theory, we have 

$$\left\|u\right\|_{S\left(\dot{H}^{s_c}, [T, +\infty)\right)} \lesssim \epsilon^\gamma  \mbox{, and }\left\|(1+|\nabla|)u\right\|_{S\left(L^2,[T, +\infty)\right)} \lesssim 1.$$

Define $u_+ = e^{-iT\Delta}u(T) +i \displaystyle\int_T^{+\infty} e^{-is\Delta}|x|^{-b}|u|^{p-1}u(s) \, ds$. Using \eqref{dual_l} and \eqref{dual_grad_l}, we estimate
\begin{align}
\|u(t)-e^{it\Delta}u_+\|_{H^1_x}&=
\biggl\|\int_t^{+\infty}e^{i(t-s)\Delta}|x|^{-b} |u|^{p-1}\biggr.\biggl. u (s)\, ds\biggr\|_{H^1_x}\\
&\lesssim \biggl\|(1+|\nabla|)\int_t^{+\infty}e^{i(t-s)\Delta}|x|^{-b} |u|^{p-1}u (s)\, ds\biggr\|_{L^2_x}\\
&\lesssim \sup_{\tau \in [t,+\infty)}\biggl\|(1+|\nabla|)\int_\tau^{+\infty}e^{i(\tau-s)\Delta}|x|^{-b} |u|^{p-1}u (s)\, ds\biggr\|_{L^2_x}\\
&\lesssim 
\biggl\|\int_\tau^{+\infty}e^{i(\tau-s)\Delta}(1+|\nabla|)\left(|x|^{-b} |u|^{p-1}u (s)\right)\, ds\biggr\|_{S(L^2,[t,+\infty))}\\
&\lesssim \left\|(1+|\nabla|)\left(|x|^{-b} |u|^{p-1}u (s)\right)\right\|_{S'(L^2,[t,+\infty))}\\
&\lesssim \left\|u\right\|^{p-1-\theta}_{S\left(\dot{H}^{s_c},[t,+\infty)\right)}.
\end{align}

(Note that the same estimate ensures that $u_+\in H^1$). Hence, we conclude that 
\begin{equation}
	\lim_{t\to+\infty} \|u(t)-e^{it\Delta}u_+\|_{H^1_x} = 0
\end{equation}
as desired.
\end{proof}

\section{Proof of scattering}
We now turn to Theorem \ref{teo1}. The main idea behind the proof is to combine radial decay with a truncated Virial identity. By choosing the right weight, and using bounds given by coercivity in large balls around the origin, one can control a time-averaged $L^p$ norm on these balls. Averaging is necessary due to the lack uniform estimates in time, since we are not employing concentration-compactness as in Holmer-Roudenko \cites{HR_Scat,DHR_Scat}.

We start with the following ``trapping'' lemmas, which can be found in Farah and Guzm\'an \cite[Lemma 4.2]{FG_Scat}.
\begin{lem}[Energy trapping]\label{coercivity}
Let $N \geq 1$ and $0 < s_c < 1$. If  $$M[u_0]^\frac{1-s_c}{s_c}E[u_0] < (1-\delta)M[u_0]^\frac{1-s_c}{s_c}E[u_0]$$ for some $\delta > 0$ and $$\|u_0\|_{L^2}^\frac{1-s_c}{s_c}\|\nabla u_0\|_{L^2} \leq \|Q\|_{L^2}^\frac{1-s_c}{s_c}\|\nabla Q\|_{L^2},$$ then there exists $\delta' = \delta'(\delta) > 0$ such that
$$
\|u_0\|_{L^2}^\frac{1-s_c}{s_c}\|\nabla u_0\|_{L^2} < (1-\delta') \|Q\|_{L^2}^\frac{1-s_c}{s_c}\|\nabla Q\|_{L^2}.
$$ for all $t \in I$, where $I \subset \mathbb{R}$ is the maximal interval of existence of the solution $u(t)$ to \eqref{INLS}. Moreover, $I = \mathbb{R}$ and $u$ is uniformly bounded in $H^1$.
\end{lem}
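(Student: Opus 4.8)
The plan is to run the standard below-threshold variational trapping argument, powered by the sharp inhomogeneous Gagliardo--Nirenberg (GN) inequality and a continuity argument on the conserved energy; this is the INLS analogue of the Holmer--Roudenko scheme.

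First I would record the sharp GN inequality for the INLS nonlinearity,
\[
\int |x|^{-b}|f|^{p+1}\,dx \leq C_{GN}\,\|\nabla f\|_{L^2}^{a}\,\|f\|_{L^2}^{(p+1)-a},
\qquad a:=\frac{N(p-1)+2b}{2},
\]
whose optimizer is the ground state $Q$, the exponent $a$ being the one forced by the scaling computed in the introduction. A short computation gives $a-2=(p-1)s_c$ and $(p+1)-a=(p-1)(1-s_c)$, so the intercritical condition $0<s_c<1$ is exactly $2<a<p+1$; the inequality $a>2$ is what drives everything. Pairing the elliptic equation \eqref{def_Q} for $Q$ against $Q$ and against $x\cdot\nabla Q$ yields the two Pohozaev/Nehari identities, giving $\int|x|^{-b}|Q|^{p+1}\,dx=\|\nabla Q\|_{L^2}^2+\|Q\|_{L^2}^2$ and $\|\nabla Q\|_{L^2}^2=\tfrac{a}{p+1}\int|x|^{-b}|Q|^{p+1}\,dx$, and in particular expressing $C_{GN}$ through $\|\nabla Q\|_{L^2}$ and $\|Q\|_{L^2}$.

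Next, combining conservation of mass and energy with GN, and writing $M=\|u_0\|_{L^2}^2$ and $y(t)=\|\nabla u(t)\|_{L^2}$, I obtain
\[
E[u_0]=E[u(t)]\;\geq\; \tfrac12\,y(t)^2-\tfrac{C_{GN}}{p+1}\,M^{\frac{(p+1)-a}{2}}\,y(t)^{a}\;=:\;f\big(y(t)\big).
\]
Since $a>2$, the function $f$ rises from $0$ to a unique maximum at some $y_*$ and then decreases, with $f''(y_*)=2-a<0$. The key algebraic step is to use the $Q$-identities to check that at $M=M[Q]$ the maximum sits exactly at $y_*=\|\nabla Q\|_{L^2}$ with $f(y_*)=E[Q]$, and that the exponent match $\frac{(p+1)-a}{a-2}=\frac{1-s_c}{s_c}$ makes $M^{\frac{1-s_c}{s_c}}f(y_*)$ and $M^{\frac{1-s_c}{2s_c}}y_*$ independent of $M$; consequently $M^{\frac{1-s_c}{s_c}}f(y_*)=M[Q]^{\frac{1-s_c}{s_c}}E[Q]$, and comparing $\|u\|_{L^2}^{\frac{1-s_c}{s_c}}\|\nabla u\|_{L^2}$ with $\|Q\|_{L^2}^{\frac{1-s_c}{s_c}}\|\nabla Q\|_{L^2}$ amounts to comparing $y(t)$ with $y_*$. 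Hence the hypotheses read precisely as $f(y(0))\leq E[u_0]<(1-\delta)f(y_*)$ together with $y(0)\leq y_*$.

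Then I would run the continuity (bootstrap) argument. The map $t\mapsto y(t)$ is continuous on the maximal interval $I$ since $u\in C(I,H^1)$, and it starts in the region $\{y\leq y_*\}$. If $y(t)$ ever reached $y_*$, then $f(y(t))=f(y_*)>E[u_0]$, contradicting $E[u_0]\geq f(y(t))$; so $y(t)<y_*$ throughout $I$. The strict gap $E[u_0]<(1-\delta)f(y_*)$ then confines $y(t)$ to $\{\,y\leq y_*:\ f(y)\leq(1-\delta)f(y_*)\,\}$, which by $f''(y_*)<0$ has the form $\{\,y\leq(1-\delta')y_*\,\}$ for some $\delta'=\delta'(\delta)>0$ independent of $u$. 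Rescaling gives $\|u(t)\|_{L^2}^{\frac{1-s_c}{s_c}}\|\nabla u(t)\|_{L^2}<(1-\delta')\|Q\|_{L^2}^{\frac{1-s_c}{s_c}}\|\nabla Q\|_{L^2}$ for all $t\in I$. Finally, this bound together with conserved mass gives $\sup_{t\in I}\|u(t)\|_{H^1}<\infty$; by the blow-up alternative from local well-posedness, finite-time blow-up is impossible, so $I=\mathbb{R}$ and $u$ is uniformly bounded in $H^1$. The main obstacle is the algebraic heart of the third paragraph, namely identifying the maximum of $f$ with the $Q$-threshold via the Pohozaev identities and extracting a separation $\delta'(\delta)$ that is uniform in $u$; this uniformity rests on the nondegeneracy $f''(y_*)=2-a<0$, equivalently mass-supercriticality $s_c>0$.
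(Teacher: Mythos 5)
Your proposal is correct and is essentially the argument the paper relies on: the lemma is not proved in the text but cited from Farah--Guzm\'an, whose proof is exactly this Holmer--Roudenko scheme (sharp weighted Gagliardo--Nirenberg inequality with optimizer $Q$, Pohozaev identities to identify the threshold, and a continuity/bootstrap argument in $y(t)=\|\nabla u(t)\|_{L^2}$, followed by the blow-up alternative). Your exponent bookkeeping ($a-2=(p-1)s_c$, the scale invariance of $M^{\frac{1-s_c}{s_c}}f(y_*)$) and the extraction of a uniform $\delta'(\delta)$ from the strict gap are all in order; you have also correctly read the intended statement through the two typos in the lemma (the right-hand side of the first hypothesis should involve $M[Q]$ and $E[Q]$, and the conclusion should be stated for $u(t)$ rather than $u_0$).
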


\begin{lem}\label{coercivity2}
Suppose, for $f \in H^1(\mathbb{R}^N)$, $N \geq 1$, that $$\|f\|_{L^2}^\frac{1-s_c}{s_c}\|\nabla f\|_{L^2} < (1-\delta) \|Q\|_{L^2}^\frac{1-s_c}{s_c}\|\nabla Q\|_{L^2}.$$ Then there exists $\delta' = \delta'(\delta) > 0$  so that 
$$
 \int|\nabla f|^2 + \left(\frac{N-b}{p+1}-\frac{N}{2}\right)\int|x|^{-b}|f|^{p+1} \geq \delta'\int|x|^{-b}|f|^{p+1}.
$$
\end{lem}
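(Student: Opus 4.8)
The plan is to reduce everything to the sharp Gagliardo--Nirenberg (GN) inequality associated to the ground state and to exploit the scaling invariance of the quantity $\|f\|_{L^2}^{(1-s_c)/s_c}\|\nabla f\|_{L^2}$. First I would rewrite the coefficient in the statement: a direct computation gives
\[
\frac{N-b}{p+1}-\frac{N}{2} = -\frac{N(p-1)+2b}{2(p+1)},
\]
so, abbreviating $\sigma = \tfrac{N(p-1)+2b}{2}$, $A = \|\nabla f\|_{L^2}^2$, $B = \int|x|^{-b}|f|^{p+1}$ and $M=\|f\|_{L^2}^2$, the claim becomes $A - \tfrac{\sigma}{p+1}B \ge \delta' B$. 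I would then recall the sharp GN inequality
\[
B \le C_{GN}\,A^{\sigma/2}\,M^{(p+1-\sigma)/2},
\]
whose optimizer is $Q$; the Euler--Lagrange and Pohozaev identities for \eqref{def_Q} give $\|\nabla Q\|_{L^2}^2 = \tfrac{\sigma}{p+1}\int|x|^{-b}|Q|^{p+1}$ (so that the left-hand side of the assertion vanishes at $Q$) together with the value $C_{GN} = B_Q\,A_Q^{-\sigma/2}\,M_Q^{-(p+1-\sigma)/2}$, where $A_Q,B_Q,M_Q$ denote the corresponding quantities for $Q$.

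Next I would factor a single power of $A$ out of the GN bound. Using $s_c=\tfrac{N}{2}-\tfrac{2-b}{p-1}$ one verifies the exponent identity $\tfrac{\sigma}{2}-1 = \tfrac{(p-1)s_c}{2}$, and hence
\[
\frac{\sigma}{p+1}B \le \Big[\tfrac{\sigma}{p+1}C_{GN}\,M^{(p+1-\sigma)/2}A^{(p-1)s_c/2}\Big]\,A.
\]
The key algebraic observation is that the bracketed factor is a power of the scale-invariant quantity: a second exponent check gives $M^{(p+1-\sigma)/2}A^{(p-1)s_c/2} = \big(M^{(1-s_c)/s_c}A\big)^{(p-1)s_c/2}$, and evaluating the bracket at $f=Q$ (using the two identities above, together with $\tfrac{\sigma}{2}-\tfrac{(p-1)s_c}{2}=1$) shows that it equals exactly $1$ when $M^{(1-s_c)/s_c}A = M_Q^{(1-s_c)/s_c}A_Q$.

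Finally I would feed in the hypothesis. Squaring $\|f\|_{L^2}^{(1-s_c)/s_c}\|\nabla f\|_{L^2} < (1-\delta)\|Q\|_{L^2}^{(1-s_c)/s_c}\|\nabla Q\|_{L^2}$ gives $M^{(1-s_c)/s_c}A < (1-\delta)^2 M_Q^{(1-s_c)/s_c}A_Q$, so the bracket is strictly less than $(1-\delta)^{(p-1)s_c} =: 1-\delta''$ with $\delta''=\delta''(\delta)>0$. Therefore $\tfrac{\sigma}{p+1}B < (1-\delta'')A$, and consequently
\[
A - \frac{\sigma}{p+1}B > \frac{\sigma}{p+1}\Big(\tfrac{1}{1-\delta''}-1\Big)B = \frac{\sigma}{p+1}\cdot\frac{\delta''}{1-\delta''}\,B,
\]
which is the assertion with $\delta' = \tfrac{\sigma}{p+1}\tfrac{\delta''}{1-\delta''}$ (the case $f\equiv 0$ being trivial). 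The only genuinely delicate point is the bookkeeping: verifying the two exponent identities and the normalization of $C_{GN}$ at $Q$, i.e.\ confirming that the non-$A$ part of the GN bound is exactly the scale-invariant quantity raised to the power $(p-1)s_c/2$. Once this homogeneity is pinned down, the sub-threshold hypothesis converts directly into the strict gap $1-\delta''<1$ that yields coercivity.
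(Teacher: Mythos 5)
Your argument is correct: the exponent identities $\sigma=2+(p-1)s_c$, $p+1-\sigma=(1-s_c)(p-1)$, and the Pohozaev relation $\|\nabla Q\|_{L^2}^2=\tfrac{\sigma}{p+1}\int|x|^{-b}|Q|^{p+1}$ all check out, and the sub-threshold hypothesis does convert the bracketed factor into $(1-\delta)^{(p-1)s_c}<1$. This is essentially the same sharp Gagliardo--Nirenberg/Pohozaev argument used in Farah--Guzm\'an, to which the paper defers for the proof of this lemma.
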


From now on, we consider $u$ to be a solution to \eqref{INLS} satisfying the conditions
\begin{equation}
M[u_0]^\frac{1-s_c}{s_c}E[u_0] < M[Q]^\frac{1-s_c}{s_c}E[Q]
\end{equation} 
and
\begin{equation}
\|u_0\|_{L^2}^\frac{1-s_c}{s_c}\|\nabla u_0\|_{L^2} \le \|Q\|_{L^2}^\frac{1-s_c}{s_c}\|\nabla Q\|_{L^2}.
\end{equation}
In particular, by Lemma \ref{coercivity}, $u$ is global and uniformly bounded in $H^1$. Moreover, there exists $\delta > 0$ such that 
\begin{equation}\label{condition_u}
\sup_{t \in \mathbb{R}}\|u_0\|_{L^2}^\frac{1-s_c}{s_c}\|\nabla u(t)\|_{L^2} <(1-2\delta) \| Q\|_{L^2}^\frac{1-s_c}{s_c}\|\nabla Q\|_{L^2}
\end{equation}
In the spirit of Dodson and Murphy \cite{MD_New}, we prove a local coercivity estimate. We start with a preliminary result.
\begin{lem}\label{lem_comut} For $N \geq 1$, let $\phi$ be a smooth cutoff to the set $\{|x|\leq\frac{1}{2}\}$ and define $\phi_R(x) = \phi\left(\frac{x}{R}\right)$. If $f \in H^1(\mathbb{R}^N)$ , then
\begin{equation}\label{comut}
\int|\nabla(\phi_Rf)|^2=\int\phi_R^2|\nabla f|^2-\int \phi_R\Delta(\phi_R)|f|^2.
\end{equation}
In particular, 
\begin{equation}\label{comut2}
\left|\int|\nabla(\phi_Rf)|^2 - \int\phi_R^2|\nabla f|^2\right|\leq\frac{c}{R^2}\|f\|^2_{L^2}.
\end{equation}
\end{lem}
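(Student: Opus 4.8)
The plan is to expand the gradient by the product rule, reduce the identity \eqref{comut} to a single integration by parts, and then deduce \eqref{comut2} from the scaling of the cutoff.

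First I would write, using that $\phi_R$ is real-valued,
\begin{equation}
\nabla(\phi_Rf)=\phi_R\nabla f+f\nabla\phi_R.
\end{equation}
Taking the squared modulus and integrating, and using the identity $2\Re(\bar f\nabla f)=\nabla(|f|^2)$ to rewrite the cross term, gives
\begin{equation}
\int|\nabla(\phi_Rf)|^2=\int\phi_R^2|\nabla f|^2+\int\phi_R\nabla\phi_R\cdot\nabla(|f|^2)+\int|\nabla\phi_R|^2|f|^2.
\end{equation}
Next I would integrate the middle term by parts. Since $\phi_R\nabla\phi_R=\tfrac12\nabla(\phi_R^2)$ is a smooth, compactly supported vector field, and since $f,\nabla f\in L^2$ forces $\nabla(|f|^2)=2\Re(\bar f\nabla f)\in L^1$ by Cauchy–Schwarz (so $|f|^2\in W^{1,1}$), the integration by parts is legitimate at the $H^1$ level and yields
\begin{equation}
\int\phi_R\nabla\phi_R\cdot\nabla(|f|^2)=-\int\nabla\cdot(\phi_R\nabla\phi_R)\,|f|^2=-\int\bigl(|\nabla\phi_R|^2+\phi_R\Delta\phi_R\bigr)|f|^2.
\end{equation}
Substituting this back, the two occurrences of $\int|\nabla\phi_R|^2|f|^2$ cancel, leaving precisely \eqref{comut}.

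Finally, for \eqref{comut2} I would exploit the scaling of the cutoff: from $\phi_R(x)=\phi(x/R)$ one gets $\Delta\phi_R(x)=R^{-2}(\Delta\phi)(x/R)$, so that $\|\phi_R\Delta\phi_R\|_{L^\infty}\leq R^{-2}\|\phi\|_{L^\infty}\|\Delta\phi\|_{L^\infty}=:c/R^2$, with $c$ depending only on $\phi$. Hence
\begin{equation}
\left|\int|\nabla(\phi_Rf)|^2-\int\phi_R^2|\nabla f|^2\right|=\left|\int\phi_R\Delta\phi_R\,|f|^2\right|\leq\frac{c}{R^2}\|f\|_{L^2}^2,
\end{equation}
which is \eqref{comut2}.

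The only genuine point requiring care — and the one I would flag as the main obstacle — is justifying the integration by parts for $f\in H^1$ rather than for smooth functions. This is handled either directly by the $W^{1,1}$ regularity of $|f|^2$ noted above, or equivalently by approximating $f$ in $H^1$ by functions in $C_0^\infty(\mathbb{R}^N)$ and passing to the limit: every term is continuous with respect to the $H^1$ norm of $f$, because $\phi_R$, $\nabla\phi_R$ and $\Delta\phi_R$ are bounded with compact support. Everything else is a routine product-rule expansion.
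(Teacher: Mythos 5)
Your proof is correct and follows essentially the same route as the paper: expand $|\nabla(\phi_Rf)|^2$ by the product rule, integrate the cross term by parts to produce $-\int(|\nabla\phi_R|^2+\phi_R\Delta\phi_R)|f|^2$ and cancel the $|\nabla\phi_R|^2|f|^2$ terms, then obtain \eqref{comut2} from $\|\phi_R\Delta\phi_R\|_{L^\infty}\leq c/R^2$. The only addition is your explicit justification of the integration by parts for $f\in H^1$, which the paper leaves implicit; this is a harmless refinement, not a different argument.
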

\begin{proof}We first calculate directly
\begin{equation}
|\nabla(\phi_R f)|^2 =\ |\nabla \phi_R f + \phi_R\nabla f|^2 = |\nabla\phi_R|^2|f|^2+2\Re(\nabla \phi_R\cdot\nabla f\,\phi_R\, \bar{f})+\phi_R^2|\nabla f|^2.
\end{equation}
Now, integrating by parts, we have
\begin{equation}
2\Re \int(\nabla \phi_R\cdot\nabla f\,\phi_R\, \bar{f}) = -\int\phi_R \Delta(\phi_R)|f|^2-\int|\nabla \phi_R|^2|f|^2.
\end{equation}
Using the last two identities, we conclude \eqref{comut}. To obtain \eqref{comut2}, we note that 
\begin{equation}
\|\phi_R\Delta(\phi_R)\|_{L^\infty} \leq \frac{c}{R^2}.
\end{equation}

\end{proof} 
\begin{lem}[Local coercivity]\label{coercivity_balls} For $N \geq 1$, let $u$ be a globally defined $H^1(\mathbb{R}^N)$-solution to \eqref{INLS} satisfying \eqref{condition_u}. There exists $ \bar{R} =\  \bar{R}(\delta, M[u_0],Q, s_c)>0$ such that, for any $R \geq \bar{R}$,
\begin{equation}
\sup_{t \in \mathbb{R}}\|\phi_Ru(t)\|^\frac{1-s_c}{s_c}_{L^2}\|\nabla(\phi_R u(t))\|_{L^2} \leq (1-\delta)\|Q\|^\frac{1-s_c}{s_c}_{L^2}\|\nabla Q\|_{L^2}.
\end{equation}
In particular, by Lemma \ref{coercivity2}, there exists $\delta' = \delta'(\delta) >0$ such that 
\begin{equation}\label{local_coercivity}
\int|\nabla ( \phi_R u(t))|^2 + \left(\frac{N-b}{p+1}-\frac{N}{2}\right)\int|x|^{-b}|\phi_R u(t)|^{p+1} \geq \delta'\int|x|^{-b}|\phi_Ru(t)|^{p+1}.
\end{equation}
\end{lem}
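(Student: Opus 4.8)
The plan is to transfer the uniform subcritical bound \eqref{condition_u} satisfied by $u(t)$ onto the localized function $\phi_R u(t)$, at the cost of an error that vanishes as $R \to \infty$, and then to feed the result into Lemma \ref{coercivity2}. First I would control the mass factor: since $0 \leq \phi_R \leq 1$ and the mass is conserved, one has $\|\phi_R u(t)\|_{L^2} \leq \|u(t)\|_{L^2} = \|u_0\|_{L^2}$ for every $t$, hence
\begin{equation}
\|\phi_R u(t)\|_{L^2}^{\frac{1-s_c}{s_c}} \leq \|u_0\|_{L^2}^{\frac{1-s_c}{s_c}}
\end{equation}
uniformly in time.

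For the gradient factor I would invoke the commutator identity \eqref{comut} of Lemma \ref{lem_comut}, together with $\phi_R^2 \leq 1$ and the bound \eqref{comut2}, to write
\begin{equation}
\|\nabla(\phi_R u(t))\|_{L^2}^2 = \int \phi_R^2|\nabla u(t)|^2 - \int \phi_R\Delta(\phi_R)|u(t)|^2 \leq \|\nabla u(t)\|_{L^2}^2 + \frac{c}{R^2}\|u_0\|_{L^2}^2.
\end{equation}
Multiplying by $\|u_0\|_{L^2}^{\frac{2(1-s_c)}{s_c}}$ and using \eqref{condition_u} in the squared form $\|u_0\|_{L^2}^{\frac{2(1-s_c)}{s_c}}\|\nabla u(t)\|_{L^2}^2 < (1-2\delta)^2 \bigl(\|Q\|_{L^2}^{\frac{1-s_c}{s_c}}\|\nabla Q\|_{L^2}\bigr)^2$, I would obtain
\begin{equation}
\|\phi_R u(t)\|_{L^2}^{\frac{2(1-s_c)}{s_c}}\|\nabla(\phi_R u(t))\|_{L^2}^2 < (1-2\delta)^2 \left(\|Q\|_{L^2}^{\frac{1-s_c}{s_c}}\|\nabla Q\|_{L^2}\right)^2 + \frac{c}{R^2}\|u_0\|_{L^2}^{\frac{2(1-s_c)}{s_c}+2}.
\end{equation}

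Since $(1-\delta)^2 - (1-2\delta)^2 = \delta(2-3\delta) > 0$ for $\delta$ small, I would then choose $\bar{R} = \bar{R}(\delta, M[u_0], Q, s_c)$ so large that the error term $\frac{c}{R^2}\|u_0\|_{L^2}^{\frac{2(1-s_c)}{s_c}+2}$ is dominated by $[(1-\delta)^2 - (1-2\delta)^2]\bigl(\|Q\|_{L^2}^{\frac{1-s_c}{s_c}}\|\nabla Q\|_{L^2}\bigr)^2$ for all $R \geq \bar{R}$; this pins down $\bar{R}$ in terms of $\delta$, the conserved mass $M[u_0]$, $s_c$ and the ground-state constants only. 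Taking square roots yields the claimed inequality, and it is uniform in $t$ precisely because both \eqref{condition_u} and the conserved mass are independent of $t$. The final estimate \eqref{local_coercivity} is then immediate by applying Lemma \ref{coercivity2} to $f = \phi_R u(t)$ at each fixed time.

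I do not expect a genuine obstacle here; the only care needed is the bookkeeping of the exponent $\frac{1-s_c}{s_c}$ and the verification that the commutator error, which carries only powers of the conserved mass, can be absorbed into the strict gap between $(1-2\delta)$ and $(1-\delta)$ uniformly in $t$. The essential structural input is that the hypothesis \eqref{condition_u} is stated with margin $2\delta$ rather than $\delta$, exactly so that the localization error is swallowed for $R$ large.
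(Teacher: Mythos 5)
Your proposal is correct and follows essentially the same route as the paper: conservation of mass for the $L^2$ factor, the commutator identity of Lemma \ref{lem_comut} to bound $\|\nabla(\phi_R u)\|_{L^2}^2$ by $\|\nabla u\|_{L^2}^2 + cR^{-2}M[u_0]$, absorption of the $O(R^{-2})$ error into the gap between $(1-2\delta)^2$ and $(1-\delta)^2$ for $R \geq \bar{R}$, and then Lemma \ref{coercivity2} applied to $f = \phi_R u(t)$. No discrepancies to report.
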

\begin{proof}First note that
\begin{equation}
 \|\phi_Ru(t)\|^2_{L^2} \leq \|u(t)\|^2_{L^2} = M[u_0],
\end{equation}
for all $t \in \mathbb{R}$. Thus, we only need to control the $\dot{H}^1$ term. Using Lemma \ref{lem_comut} and \eqref{condition_u}, we conclude
\begin{align}
\|\phi_Ru(t)\|^\frac{2(1-s_c)}{s_c}_{L^2}\|\phi_R u(t)\|^2_{\dot{H}^1}&\leq
M[u_0]^\frac{1-s_c}{s_c}_{L^2}\left(\|\nabla u(t)\|^2_{L^2}+\frac{c}{R^2}M[u_0]\right)\\
&<(1-2\delta)^2\|Q\|^\frac{2(1-s_c)}{s_c}_{L^2}\|\nabla Q\|^2_{L^2}+\frac{c}{R^2}M[u_0]^\frac{1}{s_c}_{L^2}.
\end{align}
Thus, by choosing $R$ large enough, depending on $\delta$, $M[u_0]$, $Q$ and $s_c$, we bound the last expression by $\left[(1-\delta)\|Q\|^\frac{1-s_c}{s_c}_{L^2}\|\nabla Q\|_{L^2}\right]^2$, which finishes the proof.

\end{proof}
We exploit the coercivity given by the previous lemma by making use of the Virial identity (see Dodson and Murphy \cite[Lemma 3.3]{MD_New}, Farah and Guzm\'an \cite[Proposition 7.2]{FG_Scat})

\begin{lem}[Virial identity]\label{Morawetz}
Let $a: \mathbb{R}^N \rightarrow \mathbb{R}$ be a smooth weight. If $|\nabla a| \in L^\infty$, define
$$
Z(t) = 2 \Im\int \bar{u} \nabla u \cdot \nabla a \, dx.
$$
Then, if $u$ is a solution to \eqref{INLS}, we have the following identity 
$$
\frac{d}{dt}Z(t) = \left(\frac{4}{p+1}-2\right)\int|x|^{-b}|u|^{p+1} \Delta a  -\frac{4b}{p+1}\int|x|^{-b-2}|u|^{p+1}x\cdot\nabla a 
$$
$$
-\int|u|^2\Delta\Delta a + 4\Re\sum_{i,j}\int a_{ij}\bar{u}_i u_j.
$$
\end{lem}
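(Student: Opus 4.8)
The plan is to compute $\frac{d}{dt}Z(t)$ directly, treating $u$ as a smooth, spatially decaying solution (the general $H^1$ case then following by approximation, as discussed at the end). Rewriting \eqref{INLS} as $u_t = i(\Delta u + |x|^{-b}|u|^{p-1}u)$ and writing $J_k := 2\Im(\bar u\,\partial_k u)$ for the momentum density so that $Z(t) = \int \nabla a \cdot J\,dx = \int a_k J_k\,dx$ (summation over repeated indices understood), I would differentiate under the integral sign and substitute $u_t$. Because the phase of the nonlinearity matches that of $u$, the quantity $\bar u\,|x|^{-b}|u|^{p-1}u = |x|^{-b}|u|^{p+1}$ is real, so the nonlinearity does not contribute to the local mass conservation law; the natural bookkeeping is then to split $\frac{d}{dt}Z$ into a \emph{kinetic} part, coming from the $\Delta u$ term, and a \emph{nonlinear} part, coming from the $|x|^{-b}|u|^{p-1}u$ term.

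For the kinetic part, I would invoke the identity for the free momentum flux: integrating by parts twice in the corresponding integral moves all derivatives onto the weight $a$ and yields exactly
\[
4\Re\sum_{i,j}\int a_{ij}\,\bar u_i u_j\,dx \;-\; \int |u|^2\,\Delta\Delta a\,dx,
\]
which is the classical free-Schrödinger virial expression (as a check, $a=|x|^2$ recovers $8\int|\nabla u|^2$). This step is entirely standard and I would simply record it.

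The nonlinear part is where the inhomogeneity enters and is the crux of the computation. Writing $V:=|x|^{-b}$ and $g:=V|u|^{p-1}u$, a direct application of the product rule, together with $\partial_k|u|^{p+1}=(p+1)|u|^{p-1}\Re(\bar u\,\partial_k u)$, gives
\[
\Re(\bar u\,\partial_k g) - \Re(\bar g\,\partial_k u) = (\partial_k V)\,|u|^{p+1} + \tfrac{p-1}{p+1}\,V\,\partial_k\big(|u|^{p+1}\big).
\]
Multiplying by $a_k$, integrating, and integrating the final divergence term by parts so that $\partial_k$ falls on $a_k V$ produces a $V\,\Delta a$ term with coefficient $\tfrac{4}{p+1}-2$, together with a $\nabla a\cdot\nabla V$ term; the latter adds to the direct $\nabla a\cdot\nabla V$ contribution to give total coefficient $\tfrac{4}{p+1}$. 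Finally, using $\nabla V=\nabla|x|^{-b}=-b\,|x|^{-b-2}x$ converts that into $-\tfrac{4b}{p+1}\int|x|^{-b-2}|u|^{p+1}\,x\cdot\nabla a$, reproducing precisely the two stated nonlinear terms. I expect this reshuffling of coefficients, together with recognizing the divergence structure $V\,\partial_k(|u|^{p+1})$, to be the only delicate algebraic point.

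The main obstacle is not the algebra but its rigorous justification: every integration by parts must be free of boundary terms, and the weight $\nabla V=-b\,|x|^{-b-2}x$ is singular at the origin, so the term $\int|x|^{-b-2}|u|^{p+1}\,x\cdot\nabla a$ is only manifestly integrable once $x\cdot\nabla a$ vanishes quickly enough at $0$ (which holds for the weights used later, where $a\sim|x|^2$ near the origin, and more generally for $0\le b<N$). I would therefore first establish the identity for smooth, rapidly decaying solutions---excising a small ball $\{|x|\le\varepsilon\}$, carrying out the integrations by parts on its complement, and verifying that the boundary integrals on $\{|x|=\varepsilon\}$ vanish as $\varepsilon\to0$---and then pass to the general $H^1$ solution via the local well-posedness and continuous dependence from Lemma \ref{small_data}, together with the uniform bound \eqref{E}.
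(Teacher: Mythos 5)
Your computation is correct: the splitting into the free-flux part and the nonlinear part, the identity $\Re(\bar u\,\partial_k g)-\Re(\bar g\,\partial_k u)=(\partial_k V)|u|^{p+1}+\tfrac{p-1}{p+1}V\partial_k(|u|^{p+1})$, and the subsequent integration by parts reproduce exactly the stated coefficients $\tfrac{4}{p+1}-2$ and $-\tfrac{4b}{p+1}$. The paper does not prove this lemma itself but cites Dodson--Murphy and Farah--Guzm\'an, whose derivation is precisely this standard direct computation, so your proposal matches the intended argument (and your care with the singularity of $\nabla|x|^{-b}$ at the origin and with the density/approximation step is appropriate).
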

We now have all the basic tools needed to prove scattering. Let $R \gg 1$   to be determined below. We take $a$ to be a radial function satisfying
$$
a(x) = \begin{cases}|x|^2 & |x|\leq \frac{R}{2}, \\
2R|x|-R^2 & |x| > R. \\
\end{cases}
$$
In the intermediate region $\frac{R}{2} < |x| \leq R$, we impose that
$$
\partial_ra \geq 0, \,\,\, \partial_r^2a \geq 0, \,\,\,|\partial^\alpha a(x)| \lesssim_{\alpha}R |x|^{-|\alpha|+1} \,\,\, \text{for}\,\,\,|\alpha| \geq 1.
$$
Here, $\partial_r$ denotes the radial derivative, i.e., $\partial_r a = \nabla a \cdot \frac{x}{|x|}$. Note that for $|x| \leq \frac{R}{2}$, we have
$$
a_{ij} = 2 \delta_{ij}, \,\,\, \Delta a = 2N, \,\,\, \Delta \Delta a = 0,
$$
while, for $|x| > R$, we have
$$
a_{ij} = \frac{2R}{|x|}\left[\delta_{ij} - \frac{x_i}{|x|}\frac{x_j}{|x|}\right], \,\,\, \Delta a = \frac{2(N-1)R}{|x|}, \,\,\, |\Delta \Delta a(x)| \lesssim \frac{R}{|x|^3}.
$$
\begin{pro}[Virial/Morawetz estimate]\label{virial}
For $N > 2$, let $u$ be a radial $H^1$-solution to \eqref{INLS} satisfying \eqref{condition_u}. Then, for $R = R(\delta, M[u_0], Q)$ sufficiently large, and $T>0$,
$$
\frac{1}{T}\int_0^T\int_{|x|\leq R}|u(x,t)|^{p+1}\,dx\, dt \lesssim_{u,\delta} \frac{R^{b+1}}{T}+\frac{1}{R^{(2-b)\left(\frac{N-1}{N}\right)}}.
$$
\end{pro}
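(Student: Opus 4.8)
The plan is to integrate the Virial identity of Lemma \ref{Morawetz} against the weight $a$ over $[0,T]$ and to extract from its right-hand side the coercive functional furnished by Lemma \ref{coercivity_balls}. First I would record the a priori control of the boundary term: since $\nabla a = 2x$ on $\{|x|\le R/2\}$ and $|\nabla a|=2R$ on $\{|x|>R\}$, one has $\|\nabla a\|_{L^\infty}\lesssim R$, so that $|Z(t)|\le 2\|\nabla a\|_{L^\infty}\|u\|_{L^2}\|\nabla u\|_{L^2}\lesssim R$ uniformly in $t$, using the uniform $H^1$ bound from Lemma \ref{coercivity}. Hence $\int_0^T\tfrac{d}{dt}Z\,dt=Z(T)-Z(0)$ is $O(R)$.

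Next I would analyze $\tfrac{d}{dt}Z$ pointwise in time by splitting the spatial integrals into $\{|x|\le R/2\}$, the transition annulus $\{R/2<|x|\le R\}$, and $\{|x|>R\}$. On the inner ball, where $a=|x|^2$, the substitutions $a_{ij}=2\delta_{ij}$, $\Delta a=2N$, $\Delta\Delta a=0$ and $x\cdot\nabla a=2|x|^2$ collapse the four terms into exactly $8\big[\int_{|x|\le R/2}|\nabla u|^2+c_0\int_{|x|\le R/2}|x|^{-b}|u|^{p+1}\big]$ with $c_0:=\frac{N-b}{p+1}-\frac{N}{2}$, i.e. eight times the functional of Lemma \ref{coercivity2} (here $c_0<0$ throughout the intercritical range). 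Crucially, since $u$ is radial, $\nabla u=u'(r)\frac{x}{|x|}$ and the Hessian term equals $4\int \partial_r^2 a\,|u'|^2\ge0$ on all of $\mathbb{R}^N$ (using $\partial_r^2 a\ge 0$ in the annulus and $\partial_r^2 a=0$ for $|x|>R$), so the full gradient contribution is bounded below by its inner-ball part $8\int_{|x|\le R/2}|\nabla u|^2$.

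Taking $\phi_R$ a cutoff supported in $\{|x|\le R/2\}$ with $\phi_R\equiv1$ on $\{|x|\le R/4\}$, I would then pass from the inner-ball functional to the functional of $\phi_R u$: Lemma \ref{lem_comut} converts $\int_{|x|\le R/2}|\nabla u|^2$ into $\int|\nabla(\phi_R u)|^2$ up to a commutator error $O(R^{-2}\|u\|_{L^2}^2)$, while the difference $\int_{|x|\le R/2}|x|^{-b}|u|^{p+1}-\int|x|^{-b}|\phi_R u|^{p+1}$ is supported in $\{R/4<|x|\le R/2\}$ and is controlled by the radial decay estimate \eqref{Strauss}/Corollary \ref{Radial_GN}. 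Applying the local coercivity \eqref{local_coercivity} to $\phi_R u$ then yields $\tfrac{d}{dt}Z\ge 8\delta'\int|x|^{-b}|\phi_R u|^{p+1}-\mathrm{Err}(t)$, where $\mathrm{Err}(t)$ gathers (i) the commutator term $O(R^{-2})$; (ii) the nonlinear terms over $\{|x|>R/2\}$, bounded via $|\Delta a|\lesssim R|x|^{-1}$, $|x\cdot\nabla a|\lesssim R|x|$ and the $L^\infty$-decay $\|u\|_{L^\infty(|x|>\rho)}\lesssim\rho^{-(N-1)/2}$ together with mass conservation; and (iii) the term $-\int|u|^2\Delta\Delta a$, which is $O(R^{-2})$ since $|\Delta\Delta a|\lesssim R|x|^{-3}$ is supported in $\{|x|>R/2\}$. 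A direct computation gives $\mathrm{Err}(t)\lesssim R^{-b-\frac{(N-1)(p-1)}{2}}+R^{-2}$, uniformly in $t$.

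Finally I would convert to the unweighted norm and average in time. Writing $\int_{|x|\le R}|u|^{p+1}=\int_{|x|\le R/4}|u|^{p+1}+\int_{R/4<|x|\le R}|u|^{p+1}$, the first piece obeys $\int_{|x|\le R/4}|u|^{p+1}\le R^{b}\int|x|^{-b}|\phi_R u|^{p+1}\lesssim R^{b}\big(\tfrac{d}{dt}Z+\mathrm{Err}(t)\big)$, and the second is estimated directly by Corollary \ref{Radial_GN}. Integrating over $[0,T]$ and dividing by $T$, the boundary term contributes $R^{b}\frac{|Z(T)-Z(0)|}{T}\lesssim \frac{R^{b+1}}{T}$, while $R^{b}\,\mathrm{Err}$ and the annular piece contribute $\lesssim R^{-\frac{(N-1)(p-1)}{2}}$. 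The last step is to observe that the intercritical lower bound $p>1+\frac{4-2b}{N}$ gives $\frac{(N-1)(p-1)}{2}\ge\frac{(2-b)(N-1)}{N}$, so that for $R\ge1$ every error is dominated by $R^{-(2-b)(N-1)/N}$, yielding the claim. I expect the main obstacle to be the coercivity bookkeeping of the third step: because $c_0<0$, one must track signs carefully when replacing $u$ by $\phi_R u$ in the nonlinear term, and one must verify that each boundary, commutator, and tail error is genuinely of size $R^{b}\,\mathrm{Err}\lesssim R^{-(2-b)(N-1)/N}$, a reduction that hinges precisely on the intercritical threshold $p>1+\frac{4-2b}{N}$.
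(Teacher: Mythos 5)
Your proposal is correct and follows essentially the same route as the paper's proof: the same truncated Virial weight with $|Z(t)|\lesssim R$, the same reduction of the inner-ball functional to the cutoff functional via Lemma \ref{lem_comut} and the local coercivity of Lemma \ref{coercivity_balls}, the radial decay of Corollary \ref{Radial_GN} for the exterior and transition terms, and the same time-averaging and exponent bookkeeping using $p>1+\tfrac{4-2b}{N}$. The only cosmetic difference is that the paper handles the cutoff transition annulus by shrinking it (a family $\phi^A_R$ with $A\to\infty$ and dominated convergence), whereas you fix the cutoff and absorb the annular discrepancy quantitatively via Corollary \ref{Radial_GN}; both yield the stated bound.
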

\begin{proof}
Choose $R\geq \bar{R}(\delta, M[u_0], Q, s_c)$ as in Lemma \ref{coercivity_balls}. We define the weight $a$ as above and define $Z(t)$ as in Lemma \ref{Morawetz}. Using Cauchy-Schwarz inequality, and the definition of $Z(t)$, we have
\begin{equation}\label{Mbound}
\sup_{t \in \mathbb{R}} |Z(t)| \lesssim R.
\end{equation}
As in Dodson and Murphy \cite[Proposition 3.4]{MD_New}, we compute
\begin{align}
\frac{d}{dt}Z(t) &=8\left[ \int_{|x|\leq\frac{R}{2}}|\nabla u|^2 + \left(\frac{N-b}{p+1}-\frac{N}{2}\right)\int_{|x|\leq \frac{R}{2}}|x|^{-b}|u|^{p+1}\right]\\
&\quad+\int_{|x| > \frac{R}{2}}\left[\left(\frac{4}{p+1}-2\right)(N-1)\Delta a  -\frac{4b}{p+1}\frac{x\cdot\nabla a }{|x|^2} \right]|x|^{-b}|u|^{p+1}\\
&\quad+\int_{ |x| > \frac{R}{2}}4\partial_r^2a|\partial_r u|^2 -\int_{ |x| > \frac{R}{2}}|u|^2 \Delta\Delta a,\\
\end{align}
where we used the radiality of $u$ and $a$. By the definition of $a$, and the fact that $\partial_r^2a \geq 0$,
\begin{align}\label{MP}\frac{d}{dt}Z(t)&\geq  8\left[ \int_{|x|\leq\frac{R}{2}}|\nabla u|^2 + \left(\frac{N-b}{p+1}-\frac{N}{2}\right)\int_{|x|\leq \frac{R}{2}}|x|^{-b}|u|^{p+1}\right]\\
&\quad-\frac{c}{R^b}\int_{|x|>\frac{R}{2}}|u|^{p+1}-\frac{c}{R^2}M[u_0].
\end{align}
Define $\phi^A$ ,  $A>0$, as a smooth cutoff to the set $\{|x| \leq \frac{1}{2}\}$ that vanishes outside the set $\{|x| \leq \frac{1}{2}+\frac{1}{A}\}$, and define $\phi_R^A(x) =\ \phi^A\left(\frac{x}{R}\right)$. We will now estimate the first term in the last inequality.

\begin{align}\label{MP2}
 \int_{|x|\leq\frac{R}{2}}&|\nabla u|^2 + \left(\frac{N-b}{p+1}-\frac{N}{2}\right)\int_{|x|\leq\frac{R}{2}}|x|^{-b}|u|^{p+1}=\nonumber\\
= &\left[ \int(\phi^A_R)^2|\nabla u|^2 + \left(\frac{N-b}{p+1}-\frac{N}{2}\right)\int(\phi^A_R)^2|x|^{-b}|u|^{p+1}\right]\nonumber\\
-&\underbrace{\left[ \int_{\frac{R}{2}<|x|\leq\frac{R}{2}+\frac{R}{A}}(\phi^A_R)^2|\nabla u|^2 + \left(\frac{N-b}{p+1}-\frac{N}{2}\right)\int_{\frac{R}{2}<|x|\leq\frac{R}{2}+\frac{R}{A}}(\phi^A_R)^2|x|^{-b}|u|^{p+1}\right]}_{I_A}\nonumber\\
=&\left[ \int|\phi^A_R\nabla u|^2 + \left(\frac{N-b}{p+1}-\frac{N}{2}\right)\int|x|^{-b}|\phi^A_Ru|^{p+1}\right]\nonumber\\
-&I_A-\underbrace{\left(\frac{N}{2}-\frac{N-b}{p+1}\right)\int\left((\phi^A_R)^{p+1}-(\phi^A_R)^{2}\right)|x|^{-b}|u|^{p+1}}_{II_A}.
\end{align}
Using Lemma \ref{lem_comut}, we can write  

\begin{align}\label{MP3}
\int&|\phi^A_R\nabla u|^2 + \left(\frac{N-b}{p+1}-\frac{N}{2}\right)\int|x|^{-b}|\phi^A_Ru|^{p+1}\geq\nonumber\\
&\int|\nabla (\phi^A_Ru)|^2 + \left(\frac{N-b}{p+1}-\frac{N}{2}\right)\int|x|^{-b}|\phi^A_Ru|^{p+1}-\frac{c}{R^2}M[u_0].
\end{align}

The inequalities \eqref{MP}, \eqref{MP2} and \eqref{MP3} can be rewritten as 
\begin{align}
\frac{d}{dt}Z(t) &\geq  8 \left[ \int|\nabla (\phi^A_Ru)|^2 + \left(\frac{N-b}{p+1}-\frac{N}{2}\right)\int|x|^{-b}|\phi^A_Ru|^{p+1}\right]\\
\label{MP4}&\quad-\frac{c}{R^b}\int_{|x|>\frac{R}{2}}|u|^{p+1}-\frac{c}{R^2}M[u_0]-8I_A - 8II_A.
\end{align}

By Corollary \ref{Radial_GN}
and by Lemma \ref{coercivity_balls},
we can write \eqref{MP4} as 
\begin{align}
\int|x|^{-b}|\phi^A_Ru(t)|^{p+1} \lesssim \frac{d}{dt}Z(t)+&\frac{1}{R^{\frac{(N-1)(p-1)}{2}+b}}+\frac{1}{R^2}+8I_A +8II_A.
\end{align}

We can now make $A \to +\infty$ to obtain $I_A + II_A \to 0$ by dominated convergence. Hence,

\begin{equation}\label{MP6}
R^{-b}\int_{|x|\leq\frac{R}{2}}|u(t)|^{p+1} \lesssim\int_{|x|\leq\frac{R}{2}}|x|^{-b}|u(t)|^{p+1} \lesssim \frac{d}{dt}Z(t)+\frac{1}{R^{\frac{(N-1)(p-1)}{2}+b}}+\frac{1}{R^2}.
\end{equation}

We finish the proof integrating over time, and using \eqref{Mbound}. We have
\begin{align}
\frac{1}{T}\int_0^T \int_{|x|\leq\frac{R}{2}}|u(t)|^{p+1} &\lesssim\frac{R^b}{T}\sup_{t\in[0,T]} |Z(t)| + \frac{1}{R^{\frac{(N-1)(p-1)}{2}}}+\frac{1}{R^{2-b}}\nonumber\\
&\lesssim \frac{R^{b+1}}{T} + \frac{1}{R^{\left(2-b\right)\frac{(N-1)}{N}}},\end{align}
since $p > 1+\frac{4-2b	}{N}$.
\end{proof}
We are now able to prove the \textit{energy evacuation}.
\begin{pro}[Energy evacuation]\label{energy_evacuation} Under the hypotheses of Proposition \ref{virial}, there exist a sequence of times $t_n \to +\infty$ and a sequence of radii $R_n \to +\infty$ such that
\begin{equation}\label{eq_energy_evac}
\lim_{n \to +\infty}\int_{|x|\leq R_n}|u(t_n)|^{p+1} = 0
\end{equation}
\end{pro}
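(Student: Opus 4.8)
The plan is to feed the time-averaged bound of Proposition \ref{virial} into a mean-value argument, coupling the radius $R$ to the time horizon $T$ so that both error terms vanish simultaneously. Recall that the right-hand side of Proposition \ref{virial},
$$
\frac{R^{b+1}}{T}+\frac{1}{R^{(2-b)\frac{N-1}{N}}},
$$
is small when $T$ is large compared with $R$ (first term) and when $R$ itself is large (second term). Sending $R$ and $T$ to infinity together, at compatible rates, kills both.

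First I would fix a coupling $R=R(T)$ with $R(T)\to+\infty$ and $R(T)^{b+1}/T\to 0$ as $T\to+\infty$; concretely one may take $R(T)=T^{\frac{1}{2(b+1)}}$, so that the first term equals $T^{-1/2}$ and the second equals $T^{-\frac{(2-b)(N-1)}{2(b+1)N}}$, both tending to $0$ since $0\leq b<2$ and $N>2$. For $T$ large we also have $R(T)\geq\bar{R}$, so Proposition \ref{virial} applies and gives
$$
\frac{1}{T}\int_0^T\int_{|x|\leq R(T)}|u(x,t)|^{p+1}\,dx\, dt \longrightarrow 0 \qquad\text{as } T\to+\infty.
$$

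The remaining step is to pass from this statement about averages to one about individual times tending to infinity. Since the integrand is nonnegative, restricting the average to the second half $[T/2,T]$ only inflates it by a bounded factor:
$$
\frac{2}{T}\int_{T/2}^{T}\int_{|x|\leq R(T)}|u(x,t)|^{p+1}\,dx\, dt \leq \frac{2}{T}\int_0^{T}\int_{|x|\leq R(T)}|u(x,t)|^{p+1}\,dx\, dt,
$$
and the right-hand side still tends to $0$. By the mean-value property there exists $t_T\in[T/2,T]$ for which $\int_{|x|\leq R(T)}|u(x,t_T)|^{p+1}\,dx$ does not exceed the average on the left, and hence tends to $0$ as $T\to+\infty$. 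Taking $T_n=n$ and setting $t_n=t_{T_n}\in[n/2,n]$, $R_n=R(T_n)$ yields $t_n\to+\infty$, $R_n\to+\infty$, and the desired conclusion \eqref{eq_energy_evac}.

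The argument is soft once Proposition \ref{virial} is available; the only delicate point is ensuring the extracted times diverge rather than accumulating near $0$, which is precisely why one samples $t_T$ from $[T/2,T]$ rather than from all of $[0,T]$. The freedom in the coupling $R(T)$ — any polynomial growth strictly slower than $T^{1/(b+1)}$ works — means there is no genuine obstacle beyond this bookkeeping.
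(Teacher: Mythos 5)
Your proof is correct and follows essentially the same route as the paper: couple $R$ to $T$ so that both error terms in Proposition \ref{virial} vanish (the paper balances them exactly with $R_n = T_n^{N/(3N-2+b)}$, but any admissible coupling such as yours works), then extract times by a mean-value argument. Your explicit restriction to the interval $[T/2,T]$ to guarantee $t_n\to+\infty$ is a point the paper leaves implicit, and it is a worthwhile clarification.
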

\begin{proof}
Using Proposition \ref{virial}, choose $T_n \to +\infty$ and $R_n = T_n^{\frac{N}{3N-2+b}}$, so that 
\begin{equation}
\frac{1}{T_n}\int_0^{T_n} \int_{|x|\leq R_n}|u(t)|^{p+1}\lesssim \frac{1}{T_n^{\frac{(2-b)(N-1)}{3N-2+b}}}\to 0 \text{ as }n \to +
\infty.
\end{equation}
Therefore, by the Mean Value Theorem, there is a sequence $t_n \to +\infty$ such that \eqref{eq_energy_evac} holds. The proof is complete.
\end{proof}
Using Proposition \ref{energy_evacuation}, we can prove
Theorem \ref{teo1}. We will prove only the case $t \to +\infty$, as the case $t \to -\infty$ is entirely analogous.
\begin{proof}[Proof of Theorem \ref{teo1}]Take $t_n \to +\infty$ and $R_n \to +\infty $ as in Proposition \ref{energy_evacuation}. Fix $\epsilon > 0$ and $R>0$ as in Theorem \ref{scattering_criterion}. Choosing $n$ large enough, such that $R_n \geq R$, Hölder's inequality yields
\begin{equation}
\int_{|x|\leq R} |u(x,t_n)|^2 \lesssim R^\frac{N(p-1)}{p+1}\left(\int_{|x|\leq R_n}|u(x,t_n)|^{p+1}\right)^\frac{2}{p+1} \to 0 \text{ as } n \to +\infty.
\end{equation}
Therefore, by Theorem \ref{scattering_criterion}, $u$ scatters forward in time.
\end{proof}




   \bibliography{biblio.bbl}

\end{document}